\documentclass[a4paper,10pt]{article}
\usepackage{amscd}
\usepackage{amsmath}
\usepackage{bbding}
\usepackage{amsfonts}
\usepackage{cite}
\usepackage{mathrsfs}
\usepackage{amssymb}
\allowdisplaybreaks[2]
\usepackage{leftidx}
\usepackage{graphicx,latexsym,amsmath,amssymb,amsthm,enumerate}
\usepackage{float}

\usepackage[dvipdfm,pdfstartview=FitH,
           colorlinks=true,
             pdfborder=001,
           citecolor=blue]{hyperref}

\topmargin -0cm \headheight 0in \headsep 0in \textheight
9.5in\textwidth 6.4in \hoffset -2.0cm \DeclareFontEncoding{OT2}{}{}
\DeclareTextSymbol{\cyrsftsn}{OT2}{126}
\DeclareTextSymbol{\textnumero}{OT2}{125}

\setcounter{equation}{0}

\theoremstyle{definition}
\newtheorem{theorem}{Theorem}[section]
\newtheorem{lemma}{Lemma}[section]
\newtheorem{corollary}{Corollary}[section]
\newtheorem{proposition}{Proposition}[section]
\newtheorem{definition}{Definition}[section]
\newtheorem{remark}{Remark}[section]
\newtheorem{example}{Example}[section]

\parskip 1mm

\begin{document}
\title{{\LARGE\bf{Approximate solutions in multiobjective interval-valued optimization problems: Existence theorems and optimality conditions}}\thanks{This work was supported by the National Natural Science Foundation of China (12201251,12171339), the Natural Science Foundation of Sichuan Province (2022NSFSC1830) and Southwest Minzu University Research Startup Funds (RQD2022035).}}
\author{Chuang-liang Zhang $^{a}$\thanks{E-mail: clzhang1992@sina.com (C.L. Zhang)}\;, Yun-cheng Liu  $^{b}$\thanks{E-mail:
yuncheng.liu@swun.edu.cn (Y.C. Liu)}\;,
 Nan-jing Huang $^{c}$\thanks{Corresponding author, E-mail: nanjinghuang@hotmail.com; njhuang@scu.edu.cn (N.J. Huang)}
\\
{\small\it $^a$ School of Mathematics, Jiaying University, Meizhou 514015, Guangdong, P.R. China}\\
{\small\it $^b$ School of Mathematics, Southwest Minzu University, Chengdu 610041, Sichuan, P.R. China}\\
{\small\it $^c$ Department of Mathematics, Sichuan University, Chengdu 610064, Sichuan, P.R. China}}
\date{ }
\maketitle
\begin{flushleft}
\hrulefill\\
\end{flushleft}
 {\bf Abstract}.
This paper is devoted to the study of approximate solutions for a multiobjective interval-valued optimization problem based on an interval order. We establish new existence theorems of approximate solutions for such a problem under some mild conditions. Moreover, we give KKT optimality conditions for approximate solutions for such a problem whose associated functions are nonsmooth and nonconvex. We also propose the approximate KKT optimality condition of an approximate solution for such a problem. Finally, we apply some obtained results to a noncooperative game involving the multiobjective interval-valued function.
 \\ \ \\
{\bf Keywords:} Approximate solution; multiobjective optimization; interval-valued function; KKT optimality condition; approximate KKT point
\\ \ \\
\textbf{2020 Mathematics Subject Classification:} 90C29; 90C46; 49J52
\begin{flushleft}
\hrulefill
\end{flushleft}

\section{Introduction}\label{sec1}

Multiobjective optimization has been widely used to deal with many problems stemmed from engineering, games theory, and biology (see  \cite{Jahn,Solei}). We notice that numerical algorithms for solving the multiobjective optimization problems (for short, MOPs) are related to approximate solutions. Approximate solutions of MOPs can be considered as feasible points whose objective values are near to optimal values. About studying approximate solutions for MOPs, there are numerous approaches. It is worth mentioning that Loridan \cite{Loridan} introduced and studied $\varepsilon$-quasi minimal solutions for MOPs based on Ekeland's variational principle \cite{Ekeland1} (for short, EVP).
In past decades, various types of approximate solutions of MOPs have been investigated by many authors (see \cite{CK,Gao,GM,Guti,Huerga,Son}).

In many real-life situations, the multiobjective decision-making model is often embedded with uncertain parameters that makes it difficult to obtain precise date. As is well known, interval-valued methods provide a powerful tool to deal with optimization models involving uncertain data.  There have been numerous studies addressing the interval-valued
optimization/analysis theory with applications (see \cite{BS,Moore,SB,SJ,Wu}). Recently, Zhang and Huang \cite{ZH} extended the classical results for EVP to interval cases and provided some applications; Zhang et al. \cite{ZHO} presented interval-valued variational methods by using some results of \cite{ZH}, and gave some theoretical applications concerning interval-valued functions; Villanueva et al. \cite{ Vill} obtained some optimality conditions for interval-valued optimization problems (for short, IOPs) based on the gH-directional derivative. Very recently, Sahu et al. \cite{Sahu} investigated efficient solutions for a class of vector optimization problems on an extended interval vector space and provided an application to portfolio optimization.

It is worth mentioning that there has been increasing and high demanding for investigating multiobjective interval-valued optimization problems (for short, MIOPs), from theoretical, numerical and practical aspects, see, for example \cite{COC,Hung,IT,OHCR,Singh,SAS,SGA,UPZ,Wu,Wu1}, and the references therein. Recently, Tuyen \cite{Tuyen} studied some existence results and KKT optimality conditions for approximate solutions of IOPs involving convexity; Zhang and Huang \cite{ZH} obtained the existence of approximate solutions for IOPs by using interval EVP. However, to the best of our knowledge, existence theorems and optimality conditions for approximate solutions of MIOPs have not been considered in the literature, apart from \cite{Hung} wherein some KKT-type optimality conditions were given for approximate quasi Pareto solutions of MIOPs by employing the Mordukhovich/limiting subdifferential. Thus, it would be important to investigate existence theorems and optimality conditions for approximate solutions of MIOPs under some mild conditions.

The current paper is devoted to the study of approximate solutions of MIOPs. The main contributions of this work focus on some points as following: (i) a new multiobjective interval-valued version of EVP is established;  (ii) new existence theorems for approximate solutions of MIOP are given; (iii) by using the nonsmooth analysis technique, new results on
KKT optimality conditions for approximate solutions of MIOP are proposed under nonconvex and nonsmooth conditions. Moreover, modified $\epsilon$-KKT points for the weak minimal and approximate solution of such problems are presented; (iv) new results on KKT optimality conditions for approximate
Nash equilibrium of a game with the multiobjective interval-value case are derived.

The rest of this paper is organized as follows. Section \ref{sec2} contains some basic definitions and preliminary results. Section \ref{sec3} shows new results of existence of approximate solutions for MIOP. Section \ref{sec4} presents KKT optimality conditions and the approximate KKT condition for approximate solutions of MIOP. In Section \ref{sec5}, we present an application to a noncooperative game for multiobjective interval-valued functions. To finish, Section \ref{sec6} gives some conclusions and our future research works.

\section{Preliminaries}\label{sec2}
\setcounter{equation}{0}

Let $\mathcal{I}$ be the set of all closed and bounded intervals in $\mathbb{R}$, namely, $\mathcal{I}=\{[\underline{q},\overline{q}]:\underline{q}\leq \overline{q}, \underline{q},\overline{q}\in\mathbb{R}\}$.
Let $\mathbb{R}^m_+$ be the nonnegative orthant of $\mathbb{R}^m$.
If $Q\subset \mathbb{R}^n$, then $\textrm{co} Q$ (resp., $\textrm{cl} Q$) is the convex hull (resp., the closure) of $Q$.

Letting $Q=[\underline{q},\overline{q}],R=[\underline{r},\overline{r}]\in\mathcal{I}$ with $\alpha\in\mathbb{R}$, we define
\begin{equation}
Q+R=[\underline{q}+\underline{r},\overline{q}+\overline{r}]\nonumber\\
\end{equation}
and
\begin{equation}
\alpha Q=
\begin{cases}
[\alpha \underline{q},\alpha \overline{q}] & \text{if $\alpha\geq0,$}\\
[\alpha \overline{q},\alpha \underline{q}] &\text{if $\alpha<0.$}\nonumber
\end{cases}
\end{equation}

For $Q=[\underline{q},\overline{q}]\in\mathcal{I}$, the center and width are defined as
$q^c=\frac{\underline{q}+\overline{q}}{2}$ and $q^w=\frac{\overline{q}-\underline{q}}{2}$.
Clearly, $\underline{q}=q^c-q^w$ and $\overline{q}=q^c+q^w$.

Also, the norm $\|\cdot\|_{\mathcal{I}}$ on $\mathcal{I}$ can be defined as \cite{Moore}:
$\|Q\|_{\mathcal{I}}=\max\{|\underline{q}|,|\overline{q}|\}$.

Recall the notion of generalized Hukuhara difference (for short, gH) on $\mathcal{I}$ as follows.
\begin{definition}(\cite{BS,SB})\label{de2.1}
Let $Q,R \in \mathcal{I}$. The gH between $Q$ and $R$ is defined as
$$
Q\ominus_{gH}R=P\Leftrightarrow Q=R+P\; \textrm{or}\; R=Q-P,
$$
where $P\in\mathcal{I}$.
\end{definition}
Obviously, if $Q=[\underline{q},\overline{q}]$ and $R=[\underline{r},\overline{r}]$, then $Q\ominus_{gH}R=\left[\min\{\underline{q}-\underline{r},\overline{q}-\overline{r}\},\max\{\underline{q}
-\underline{r},\overline{q}-\overline{r}\}\right]$.

The Hausdorff distance $h$ on $\mathcal{I}$ is given by
$$h(Q,R)=\max\left\{\sup_{q\in Q}d_R(q),\sup_{r\in R}d_Q(r)\right\},\quad \forall Q,R \in \mathcal{I},$$
where $d_R(q):=\inf_{r\in R}|q-r|$.
Furthermore, if $Q=[\underline{q},\overline{q}]$ and $R=[\underline{r},\overline{r}]$, then
$$h(Q,R)=\|Q\ominus_{gH}R\|_{\mathcal{I}}=\max\{|\underline{q}-\underline{r}|,|\overline{q}-\overline{r}|\}.$$
Clearly, $(\mathcal{I},h)$ is a complete metric space.

A sequence $\{Q_n\}\subset\mathcal{I}$ is said to be convergence to $Q\in\mathcal{I}$  if $h(Q_n,Q)\rightarrow0$. Obviously, if $Q_n=[\underline{q}_n,\overline{q}_n]$ and $Q=[\underline{q},\overline{q}]$, then
$h(Q_n,Q)\rightarrow 0$ iff $|\underline{q}_n-\underline{q}|\rightarrow0$ and $|\overline{q}_n-\overline{q}|\rightarrow0$.

We introduce two interval orders on $\mathcal{I}$ as follows.
\begin{definition}\label{de2.2}(\cite{IT})
Let $Q=[\underline{q},\overline{q}]$ and $R=[\underline{r},\overline{r}]$. The interval orders $\preccurlyeq_{CW}$ and $\prec_{CW}$ on $\mathcal{I}$ are defined by
$$Q\preccurlyeq_{CW} R\Leftrightarrow  q^c\leq r^c\; \textrm{and} \;q^w\leq r^w;\quad
Q\prec_{CW} R \Leftrightarrow \ q^c< r^c\; \textrm{and} \;q^w< r^w.$$
\end{definition}
\begin{remark}\label{re2.1}
Clearly, (i) $\preccurlyeq_{CW}$ is the partial order, i.e., it is reflexive, antisymmetric and transitive;

(ii) for each $P\in \mathcal{I}$, $Q\preccurlyeq_{CW}(\textrm{resp}.,\prec_{CW})~  R\Leftrightarrow Q+P\preccurlyeq_{CW}(\textrm{resp}., \prec_{CW})~  R +P$;

(iii) if  $Q_1\preccurlyeq_{CW}(\textrm{resp}.,\prec_{CW})~  R_1$ and  $Q_2\preccurlyeq_{CW}(\textrm{resp}.,\prec_{CW})~  R_2$ for $Q_1,Q_2,R_1,R_2\in\mathcal{I}$, then $Q_1+Q_2\preccurlyeq_{CW}(\textrm{resp}.,\prec_{CW})~  R_1+R_2$;

(iv) for each $\alpha\geq0$, $\alpha Q\preccurlyeq_{CW} \alpha R$.
\end{remark}

Let $f:\mathbb{R}^n\to\mathcal{I}$ be an interval-valued function (for short, IVF) denoted by $f(u)=[\underline{f}(u),\overline{f}(u)]$ with $\underline{f}(u)\leq \overline{f}(u)$.
In this case, we write $f^c(u)=\frac{\underline{f}(u)+\overline{f}(u)}{2}$ and $f^w(u)=\frac{\overline{f}(u)-\underline{f}(u)}{2}$.

Recall some concepts of IVFs and their properties as follows.
\begin{definition}\label{de2.3}
An IVF $f:\mathbb{R}^n\to\mathcal{I}$ is said to be
\begin{itemize}
\item [(i)] $\preccurlyeq_{CW}$-lsc ( $\preccurlyeq_{CW}$-lower semicontinuous) on $\mathbb{R}^n$ if $f^c$ and $f^w$ are lsc on $\mathbb{R}^n$;

\item [(ii)] \cite{Wu} continuous on $\mathbb{R}^n$ if for every $u_0\in\mathbb{R}^n$ and each $\{u_n\}\subset \mathbb{R}^n$ satisfying $u_n\rightarrow u_0$, one has
    $$\lim_{n\rightarrow +\infty}h(f(u_n),f(u_0))=0;$$

\item[(iii)]  locally Lipschitz continuous on $\mathbb{R}^n$ if for every $u_0\in \mathbb{R}^n$, there exist a neighbourhood $U(u_0)$ and some constant $L>0$ satisfying
    $$\|f(u_1)\ominus_{gH}f(u_2)\|_{\mathcal{I}}\leq L\|u_1-u_2\|,\;\forall u_1,u_2\in U(u_0),$$
    where $\|\cdot\|$ denotes the norm on $\mathbb{R}^n$.
\end{itemize}
\end{definition}

\begin{remark}\label{re2.2}
\begin{itemize}
\item [(a)] Clearly, if $f$ and $g$ are $\preccurlyeq_{CW}$-lsc on $\mathbb{R}^n$, then $f+g$ is also $\preccurlyeq_{CW}$-lsc.

\item [(b)] If $f$ is $\preccurlyeq_{CW}$-lsc on $\mathbb{R}^n$, then for each $Q\in\mathcal{I}$, $\{u\in\mathbb{R}^n:f(u)\preccurlyeq_{CW} Q\}$ is closed.

\item [(c)] By \cite{SGA}, it follows that $f$ is continuous iff $f^c$ and $f^w$ are continuous.
Moreover, if $f$ is continuous, then it is  $\preccurlyeq_{CW}$-lsc.

\item[(d)] The local Lipschitz continuity of IVFs implies the continuity. Furthermore,
$f$ is locally Lipschitz continuous iff $\underline{f}$ and $\overline{f}$ are locally Lipschitz continuous. In this case, $f^c$ and $f^w$ are also
 locally Lipschitz continuous.
\end{itemize}
\end{remark}

Consider the Clarke directional derivative of the local Lipschitz continuity of function $\psi:\mathbb{R}^n\to\mathbb{R}$ at $z$ in the direction $w$ as
$$\psi^0(z;w):=\limsup_{u\rightarrow z,t\rightarrow0^+}\frac{\psi(u+tw)-\psi(u)}{t}.$$
The generalized gradient of $\psi$ at $z$ can be defined as
$$
\partial^0\psi(z):=\{z^*\in \mathbb{R}^n:\psi^0(z;w)\geq\langle z^*,w\rangle,\forall w\in \mathbb{R}^n\}.
$$
From \cite{Clarke2,Clarke3,Clarke4}, we know that
\begin{itemize}
\item[(i)] $\partial^0(\lambda_1\psi_1(z)+\lambda_2\psi_2(z))\subset\lambda_1\partial^0\psi_1(z)+\lambda_2\partial^0\psi_2(z)$, where $\psi_1,\psi_2$ are real-valued functions on $\mathbb{R}^n$, $\lambda_1,\lambda_2\in\mathbb{R}$;

\item[(ii)] $\partial^0\phi(z)\subset co\{\partial^0\psi_i(z):\phi(z)=\psi_i(z),i=1,2,\cdots,m\}$, where $\phi(z):=\max\{\psi_i(z):i=1,2,\cdots,m\}$.
\end{itemize}

For $\emptyset\not=A\subset\mathbb{R}^n$,  the Clarke tangent cone to $A$ at $z$ is defined by
$$
T_A(z):=\{w\in\mathbb{R}^n:d_A^0(z;w)=0\},
$$
where $d_A^0(z;w)=\limsup_{u\rightarrow z,t\rightarrow0^+}\frac{d_A(u+tw)-d_A(u)}{t}$,
and the Clarke normal cone to $A$ at $z$ is defined as
$$
N_A(z):=\{z^*\in\mathbb{R}^n:\langle z^*,u\rangle\leq0,\forall u\in T_A(z)\}.
$$

Now we give the notion of weakly generalized gradient of IVFs.
Let $f:\mathbb{R}^n\to \mathcal{I}$ be a locally Lipschitz continuous IVF. Then the weakly generalized gradient of $f$ can be defined as follows
$$\partial^0 f(u):=\textrm{co}\{\partial^0 f^c(u),\partial^0 f^w(u)\}.$$

\begin{remark}\label{re2.3}
Obviously, for $\overline{u}\in\mathbb{R}^n$ and $\delta>0$, one has
$\partial^0 (f(\cdot)+[0,\delta\|\cdot-\overline{u}\|])\subset \partial^0f(\cdot) +\delta\mathbb{B}^n,$ where $\mathbb{B}^n$ denotes the closed unit ball in $\mathbb{R}^n$.
\end{remark}

In this paper, we consider the following multiobjective interval-valued optimization problem (MIOP):
$$
\quad \textrm{Min}\; F(u):=(F_1(u),F_2(u),\cdots,F_m(u))\quad \textrm{s.t.} \;\; u\in S,
$$
where $g_j: \mathbb{R}^n\to\mathbb{R}$ for $j\in J:=\{1,2,\cdots,p\}$, $S:=\{u\in \mathbb{R}^n:g_j(u)\leq0,\forall j\in J\}$ and $F_k: \mathbb{R}^n\to \mathcal{I}$ is an IVF with $F_k(u):=[\underline{F}_k(u),\overline{F}_k(u)]$, $F^c_k(u)=\frac{\underline{F}_k(u)+\overline{F}_k(u)}{2}, F^w_k(u)=\frac{\overline{F}_k(u)-\underline{F}_k(u)}{2}$ for $k\in K:=\{1,2,\cdots,m\}$.

Now we introduce the following notions of approximate solutions of MIOPs.
\begin{definition}\label{de2.5}
Let $U(z_0)$ be a neighbourhood of $z_0\in S$. We say that $z_0$ is
\begin{itemize}
\item [(i)] a locally weak minimal solution for MIOP if there is no $z\in S\cap U(z_0)$ satisfying $F_i(z)\prec_{CW}F_i(z_0)$ for all $i\in K$;
\item[(ii)] a locally weak $\varepsilon$-minimal solution for MIOP if for any given $\varepsilon:=(\varepsilon_1,\varepsilon_2,\cdots,\varepsilon_m)\in\mathbb{R}^m_+$,
 there is no $z\in S\cap U(z_0)$ satisfying $F_i(z)+[0,\varepsilon_i]\prec_{CW}F_i(z_0)$ for all $i\in K$;
 \item[(iii)] a locally weak $\varepsilon$-quasi-minimal solution for MIOP if for any given $\varepsilon:=(\varepsilon_1,\varepsilon_2,\cdots,\varepsilon_m)\in\mathbb{R}^m_+$,
there is no $z\in S\cap U(z_0)$ satisfying $F_i(z)+[0,\varepsilon_i\|z-z_0\|]\prec_{CW}F_i(z_0)$ for all $i\in K$.
\end{itemize}
By simply saying $U(z_0)=S$, we say that the above solutions are global on $S$. In this cases, the weak minimal solution set,  the weak $\varepsilon$-minimal solution set, and the weak $\varepsilon$-quasi-minimal solution set of MIOP are denoted by $M(F,S)$, $M(F,S,\varepsilon)$ and $QM(F,S,\varepsilon)$, respectively.
\end{definition}

\begin{remark}\label{re2.4} If $\varepsilon=0$, then the notions of a weak $\varepsilon$-minimal solution and a weak $\varepsilon$-quasi-minimal solution coincide with the weak minimal solution.
\end{remark}

\begin{proposition}\label{p2.1}
Let $\varepsilon_0>0$ and $\varepsilon':=(\sqrt{\varepsilon_0},\sqrt{\varepsilon_0},\cdots,\sqrt{\varepsilon_0})\in\mathbb{R}^m_+$. If $\overline{u}\in QM(F,S,\varepsilon')$, then $\overline{u}\in M(F,\overline{B}(\overline{u},\sqrt{\varepsilon_0}),\varepsilon)$, where
$\overline{B}(\overline{u},\sqrt{\varepsilon_0}):=\{z\in S: \|z-\overline{u}\|\leq\sqrt{\varepsilon_0}\}$ and $\varepsilon:=(\varepsilon_0,\varepsilon_0,\cdots,\varepsilon_0)\in\mathbb{R}^m_+$.
\end{proposition}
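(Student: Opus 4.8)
The plan is to argue by contraposition. Assume $\overline{u}\notin M(F,\overline{B}(\overline{u},\sqrt{\varepsilon_0}),\varepsilon)$; by Definition \ref{de2.5}(ii) this produces a point $z\in\overline{B}(\overline{u},\sqrt{\varepsilon_0})$ with
$$F_i(z)+[0,\varepsilon_0]\prec_{CW}F_i(\overline{u}),\qquad \forall i\in K.$$
Since $\overline{B}(\overline{u},\sqrt{\varepsilon_0})\subseteq S$, this same $z$ lies in $S$ and satisfies $\|z-\overline{u}\|\leq\sqrt{\varepsilon_0}$. My aim would then be to upgrade the displayed relation into the quasi-minimal form carrying the weight $\sqrt{\varepsilon_0}\,\|z-\overline{u}\|$, which would contradict $\overline{u}\in QM(F,S,\varepsilon')$ (recall this is the \emph{global} set, so $z$ being anywhere in $S$ suffices).

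The key numerical observation is the scaling identity behind the choice $\varepsilon'=(\sqrt{\varepsilon_0},\dots,\sqrt{\varepsilon_0})$, namely
$$\sqrt{\varepsilon_0}\,\|z-\overline{u}\|\leq\sqrt{\varepsilon_0}\cdot\sqrt{\varepsilon_0}=\varepsilon_0.$$
Thus the additive perturbing interval shrinks, and I would record that $[0,\sqrt{\varepsilon_0}\|z-\overline{u}\|]\preccurlyeq_{CW}[0,\varepsilon_0]$: for $0\leq a\leq b$ the interval $[0,a]$ has center $a/2$ and width $a/2$, both bounded by the center $b/2$ and width $b/2$ of $[0,b]$, which is exactly the defining condition of $\preccurlyeq_{CW}$. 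Adding $F_i(z)$ to both sides and using the translation invariance in Remark \ref{re2.1}(ii) gives
$$F_i(z)+[0,\sqrt{\varepsilon_0}\|z-\overline{u}\|]\preccurlyeq_{CW}F_i(z)+[0,\varepsilon_0],\qquad \forall i\in K.$$

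Chaining the two displays through the mixed transitivity of the orders (if $A\preccurlyeq_{CW}B\prec_{CW}C$ then $A\prec_{CW}C$, which is immediate from comparing centers and widths) yields
$$F_i(z)+[0,\sqrt{\varepsilon_0}\|z-\overline{u}\|]\prec_{CW}F_i(\overline{u}),\qquad \forall i\in K.$$
As $z\in S$, this contradicts $\overline{u}\in QM(F,S,\varepsilon')$ via Definition \ref{de2.5}(iii), so no such $z$ exists and the desired conclusion $\overline{u}\in M(F,\overline{B}(\overline{u},\sqrt{\varepsilon_0}),\varepsilon)$ follows. I do not anticipate a genuine obstacle here: the statement is essentially bookkeeping resting on the single fact $(\sqrt{\varepsilon_0})^2=\varepsilon_0$, and the only point I would treat carefully is the monotonicity of $\prec_{CW}$ under enlarging the additive interval, which I would verify directly from the center/width characterization rather than invoking any deeper structure.
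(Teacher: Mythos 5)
Your proof is correct and follows essentially the same route as the paper's: both argue by contradiction from a point $z$ violating weak $\varepsilon$-minimality on the ball, use $\sqrt{\varepsilon_0}\,\|z-\overline{u}\|\leq\sqrt{\varepsilon_0}\cdot\sqrt{\varepsilon_0}=\varepsilon_0$ to shrink the perturbing interval, and derive a violation of $\overline{u}\in QM(F,S,\varepsilon')$. The paper simply unpacks the orders into the center/width scalar inequalities directly, whereas you package the same computation as monotonicity of the interval order plus mixed transitivity of $\preccurlyeq_{CW}$ and $\prec_{CW}$; the content is identical.
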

\begin{proof}
Let $\overline{u}\in QM(F,S,\varepsilon')$. If $\overline{u}\not\in  M(F,\overline{B}(\overline{u},\sqrt{\varepsilon_0}),\varepsilon)$, then
there is $z'\in \overline{B}(\overline{u},\sqrt{\varepsilon_0})$ satisfying $F_i(z')+[0,\varepsilon_0]\prec_{CW}F_i(\overline{u})$ for all $i\in K$.
Then we have $F^c_i(z')+\frac{\varepsilon_0}{2}<F^c_i(\overline{u})$ and $F^w_i(z')+\frac{\varepsilon_0}{2}<F^w_i(\overline{u})$,
and so
$$F^c_i(z')+\frac{\sqrt{\varepsilon_0}\|z'-\overline{u}\|}{2}<F^c_i(\overline{u})-
\frac{\varepsilon_0}{2}+\frac{\sqrt{\varepsilon_0}\|z'-\overline{u}\|}{2}<F^c_i(\overline{u})-
\frac{\varepsilon_0}{2}+\frac{\sqrt{\varepsilon_0}\cdot\sqrt{\varepsilon_0}}{2}=F^c_i(\overline{u}),$$
$$F^w_i(z')+\frac{\sqrt{\varepsilon_0}\|z'-\overline{u}\|}{2}<F^w_i(\overline{u})-
\frac{\varepsilon_0}{2}+\frac{\sqrt{\varepsilon_0}\|z'-\overline{u}\|}{2}<F^c_i(\overline{u})-
\frac{\varepsilon_0}{2}+\frac{\sqrt{\varepsilon_0}\cdot\sqrt{\varepsilon_0}}{2}=F^w_i(\overline{u}).$$
This means that $F_i(z')+[0,\sqrt{\varepsilon_0}\|z'-\overline{u}\|]\prec_{CW}F_i(\overline{u})$ for all $i\in K$, which contradicts with $\overline{u}\in QM(F,S,\varepsilon')$. Thus, $\overline{u}\in  M(F,\overline{B}(\overline{u},\sqrt{\varepsilon_0}),\varepsilon)$
\end{proof}

To obtain multiobjective interval-valued versions of EVP, we need the following lemma.
\begin{lemma}\label{le2.1}(\cite{DHM})
Let $(Z,d)$ be a complete metric space and $T:Z\rightarrow 2^Z$ be a set-valued mapping. Suppose that
\begin{itemize}
\item[(i)] for any $z\in Z$, $z\in T(z)$, and $T(z)$ is closed;
\item[(ii)] for any $y\in T(z)$, $T(y)\subseteq T(z)$;
\item[(iii)] for any sequence $\{u_n\}$ in $Z$ with $u_{n+1}\in T(u_n)$ for all $n=1,2,\cdots$, one has $d(u_n,u_{n+1})\rightarrow0$.
\end{itemize}
Then there exists $\widehat{u}\in Z$ such that $T(\widehat{u})=\{\widehat{u}\}$.
\end{lemma}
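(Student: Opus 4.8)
The plan is to build, by a greedy ``descent,'' a single chain $u_1,u_2,\ldots$ with $u_{n+1}\in T(u_n)$ that turns out to be Cauchy, and then to show its limit is the desired point $\widehat{u}$. Pick $u_1\in Z$ arbitrarily. Having chosen $u_n$, set $D_n:=\sup_{z\in T(u_n)}d(u_n,z)$ and select $u_{n+1}\in T(u_n)$ so that
$$d(u_n,u_{n+1})>D_n-\frac{1}{n}$$
(when $D_n=+\infty$, instead take any $u_{n+1}\in T(u_n)$ with $d(u_n,u_{n+1})>n$). Condition (i) guarantees that such a choice exists, and by construction $u_{n+1}\in T(u_n)$ for every $n$.

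First I would record the nesting. By (ii), $u_{n+1}\in T(u_n)$ gives $T(u_{n+1})\subseteq T(u_n)$, so $T(u_1)\supseteq T(u_2)\supseteq\cdots$ and, in particular, $u_m\in T(u_n)$ whenever $m\geq n$; hence $d(u_n,u_m)\leq D_n$ for all $m\geq n$. The crux is then to prove $D_n\to0$, and here condition (iii) enters: applied to the chain $\{u_n\}$ it yields $d(u_n,u_{n+1})\to0$. This already forces the supremum to be finite for large $n$ (otherwise the alternative branch would produce $d(u_n,u_{n+1})>n$ infinitely often, contradicting (iii)), and the selection inequality $D_n\leq d(u_n,u_{n+1})+\frac{1}{n}$ then shows $D_n\to0$. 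Combined with $d(u_n,u_m)\leq D_n$ for $m\geq n$, this makes $\{u_n\}$ a Cauchy sequence, so completeness of $(Z,d)$ supplies a limit $\widehat{u}\in Z$.

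It remains to identify $\widehat{u}$ as the fixed point. Since $u_m\in T(u_n)$ for all $m\geq n$ and each $T(u_n)$ is closed by (i), passing to the limit gives $\widehat{u}\in T(u_n)$ for every $n$. Reflexivity in (i) yields $\widehat{u}\in T(\widehat{u})$, so $\{\widehat{u}\}\subseteq T(\widehat{u})$. For the reverse inclusion, take any $z\in T(\widehat{u})$; from $\widehat{u}\in T(u_n)$ and transitivity (ii) we obtain $T(\widehat{u})\subseteq T(u_n)$, whence $z\in T(u_n)$ and $d(u_n,z)\leq D_n\to0$. Letting $n\to\infty$ forces $z=\widehat{u}$, so $T(\widehat{u})\subseteq\{\widehat{u}\}$, and therefore $T(\widehat{u})=\{\widehat{u}\}$.

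I expect the main obstacle to be the Cauchy property of the constructed chain. The payoff of the ``almost farthest point'' selection is precisely that it converts the a priori information in (iii)---that consecutive steps shrink---into control of the entire tail through the radii $D_n$, so the delicate point is establishing $D_n\to0$ (including ruling out infinite suprema via (iii)). Once $D_n\to0$ is secured, the convergence of $\{u_n\}$ and the verification $T(\widehat{u})=\{\widehat{u}\}$ follow routinely from closedness, reflexivity and transitivity.
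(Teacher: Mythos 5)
The paper does not prove this lemma; it is quoted verbatim from the cited reference [DHM], so there is no in-paper proof to compare against. Your argument is correct and complete, and it is essentially the standard Dancs--Heged\"us--Medvegyev proof: the ``almost farthest point'' selection $d(u_n,u_{n+1})>D_n-\tfrac1n$ (with the separate branch handling $D_n=+\infty$) correctly converts hypothesis (iii) into $D_n\to0$, which yields the Cauchy property, and the closedness/transitivity arguments identifying the limit as the fixed point are all sound.
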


\section{Existence theorems}\label{sec3}
\setcounter{equation}{0}
In this section, we will prove some new existence theorems for approximate solutions of MIOP. We first give the sufficient condition for the existence of weak $\varepsilon$-minimal solutions of MIOP.
\begin{theorem}\label{th3.1}
Let $\varepsilon\in\mathbb{R}^m_+\backslash\{0\}$. Then $M(F,S,\varepsilon)\neq\emptyset$.
\end{theorem}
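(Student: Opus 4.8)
The plan is to observe that this statement holds \emph{without any continuity, convexity, or compactness hypotheses}, because the width component of an interval-valued function carries an automatic lower bound. First I would record the elementary reformulation of the order relation: for $z,z_0\in S$ and $i\in K$, using the definitions of interval addition and of $\prec_{CW}$, and noting that $[0,\varepsilon_i]$ has center and width both equal to $\varepsilon_i/2$, one has $F_i(z)+[0,\varepsilon_i]\prec_{CW}F_i(z_0)$ if and only if $F_i^c(z)+\frac{\varepsilon_i}{2}<F_i^c(z_0)$ and $F_i^w(z)+\frac{\varepsilon_i}{2}<F_i^w(z_0)$.

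Next, since $\varepsilon\in\mathbb{R}^m_+\backslash\{0\}$, I fix an index $i^*\in K$ with $\varepsilon_{i^*}>0$. The crucial point is that $F_{i^*}^w(u)=\frac{\overline{F}_{i^*}(u)-\underline{F}_{i^*}(u)}{2}\geq0$ for every $u$, so $w^*:=\inf_{u\in S}F_{i^*}^w(u)$ is a finite real number with $w^*\geq0$. Because $\varepsilon_{i^*}/2>0$, the definition of infimum lets me choose $z_0\in S$ with $F_{i^*}^w(z_0)<w^*+\frac{\varepsilon_{i^*}}{2}$.

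I then claim $z_0\in M(F,S,\varepsilon)$, arguing by contradiction. If $z_0\notin M(F,S,\varepsilon)$, there is $z\in S$ with $F_i(z)+[0,\varepsilon_i]\prec_{CW}F_i(z_0)$ for all $i\in K$; applying the reformulation above at the single coordinate $i^*$ yields $F_{i^*}^w(z)+\frac{\varepsilon_{i^*}}{2}<F_{i^*}^w(z_0)<w^*+\frac{\varepsilon_{i^*}}{2}$, hence $F_{i^*}^w(z)<w^*$, which contradicts the fact that $w^*$ is the infimum of $F_{i^*}^w$ over $S$. Therefore no such $z$ exists and $z_0$ is a global weak $\varepsilon$-minimal solution, so $M(F,S,\varepsilon)\neq\emptyset$.

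I expect the only genuine obstacle to be conceptual rather than computational: recognizing that the strict positivity of one component $\varepsilon_{i^*}$, combined with the built-in nonnegativity of the width function, already forces existence, so that none of the usual coercivity or semicontinuity assumptions enter. The remaining care is simply to exploit the \emph{width} slack at the one coordinate where $\varepsilon$ is strictly positive (the center components and the other coordinates are irrelevant), and to note that $S$ is tacitly nonempty so that the infimum is taken over a nonempty set.
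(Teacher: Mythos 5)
Your proof is correct, and it takes a genuinely different and more economical route than the paper's. The paper argues by iteration: it defines $A(u)=\{z\in S:\sum_i F_i(z)+[0,\sum_i\varepsilon_i]\prec_{CW}\sum_i F_i(u)\}$, builds a sequence $u_{n+1}\in A(u_n)$ whenever possible, telescopes the resulting strict inequalities to get $\sum_i F_i^w(u_{n+1})+\frac{n+1}{2}\sum_i\varepsilon_i<\sum_i F_i^w(\overline{u})$, and then invokes $F_i^w\geq 0$ to force $\sum_i\varepsilon_i\leq 0$ in the limit, a contradiction; the solution is whichever iterate has $A(u_n)=\emptyset$. You instead pick a single coordinate $i^*$ with $\varepsilon_{i^*}>0$, observe that $w^*:=\inf_{u\in S}F_{i^*}^w(u)\geq 0$ is finite, take a near-infimizer $z_0$ with $F_{i^*}^w(z_0)<w^*+\varepsilon_{i^*}/2$, and show in one line that no $z$ can $\varepsilon$-dominate it. Both arguments hinge on exactly the same structural fact --- the built-in nonnegativity of the width function, which the paper itself flags in Remark \ref{re3.1} --- but yours dispenses with the sequential construction and the passage to the limit entirely, and it isolates the precise reason the theorem is hypothesis-free: only one strictly positive component of $\varepsilon$ and only the width (not the center) at that component are ever used. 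Your parenthetical that $S$ is tacitly assumed nonempty is apt; the paper makes the same tacit assumption when it fixes $\overline{u}\in S$.
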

\begin{proof}
Set $A(u)=\{z\in S: \sum_{i=1}^{m} F_i(z)+[0,\sum_{i=1}^m\varepsilon_i]\prec_{CW}\sum_{i=1}^{m} F_i(u)\}$ for $u\in S$. Fix $\overline{u}\in S$. If $A(\overline{u})=\emptyset$, then $\overline{u}\in M(F,S,\varepsilon)$.
 Suppose that $\overline{u}\not\in M(F,S,\varepsilon)$. Then there is $z'\in S$ satisfying $F_i(z')+[0,\varepsilon_i]\prec_{CW}F_i(\overline{u})$ for all $i\in K$. By Remark \ref{re2.1}, one has $\sum_{i=1}^{m}F_i(z')+[0,\sum_{i=1}^{m}\varepsilon_i]\prec_{CW}\sum_{i=1}^{m}F_i(\overline{u})$,
which contradicts with $A(\overline{u})=\emptyset$. Thus, if $A(\overline{u})=\emptyset$, then the proof is finshied. Assume that $A(\overline{u})\not=\emptyset$.
Then there is $u_1\in A(\overline{u})$ with $A(u_1)\neq\emptyset$ (otherwise, the proof is finished), and so $\sum_{i=1}^{m} F_i(u_1)+[0,\sum_{i=1}^m\varepsilon_i]\prec_{CW}\sum_{i=1}^{m} F_i(\overline{u})$. This means
that $\sum_{i=1}^{m} F^c_i(u_1)+\frac{1}{2}\sum_{i=1}^m\varepsilon_i<\sum_{i=1}^{m} F^c_i(\overline{u})$
and $\sum_{i=1}^{m} F^w_i(u_1)+\frac{1}{2}\sum_{i=1}^m\varepsilon_i<\sum_{i=1}^{m} F^w_i(\overline{u})$.
Repeated the above process, we can find $u_{n+1}\in A(u_n)$ with $A(u_n)\neq\emptyset$, $n=1,2,\cdots$, which implies that
 $\sum_{i=1}^{m} F_i(u_{n+1})+[0,\sum_{i=1}^m\varepsilon_i]\prec_{CW}\sum_{i=1}^{m} F_i(u_n)$.
Thus,  $\sum_{i=1}^{m} F^c_i(u_{n+1})+\frac{1}{2}\sum_{i=1}^m\varepsilon_i<\sum_{i=1}^{m} F^c_i(u_n)$
and $\sum_{i=1}^{m} F^w_i(u_{n+1})+\frac{1}{2}\sum_{i=1}^m\varepsilon_i<\sum_{i=1}^{m} F^w_i(u_n)$.
We can deduce that
$$
\sum_{i=1}^{m} F^c_i(u_{n+1})+\frac{n+1}{2}\sum_{i=1}^m\varepsilon_i<\sum_{i=1}^{m} F^c_i(\overline{u}),\;
\sum_{i=1}^{m} F^w_i(u_{n+1})+\frac{n+1}{2}\sum_{i=1}^m\varepsilon_i<\sum_{i=1}^{m} F^w_i(\overline{u}).
$$
By $F^w_i(u)\geq0$ for $i\in K$, we have $\frac{1}{2}\sum_{i=1}^m\varepsilon_i<\frac{1}{n+1}\sum_{i=1}^{m} F^w_i(\overline{u})$. Thus, by $\varepsilon\in\mathbb{R}^m_+\backslash\{0\}$, when $n\to +\infty$, we get $0<\sum_{i=1}^{m}\varepsilon_i\leq0$, which is a contradiction.
\end{proof}

Next we establish some multiobjective interval-valued versions of EVP.
\begin{proposition}\label{p3.1}
Let $X$ be a real Banach space, $\|\cdot\|_X$ be the norm of $X$, and $F_k:X\to \mathcal{I}$ be $\preccurlyeq_{CW}$-lsc for $k\in K$.
Then for $\varepsilon\in\mathbb{R}^m_+\backslash\{0\}$ and $x_0\in X$ with $\sum_{k=1}^{m}F_k(u)+[0,\sum_{k=1}^{m}\varepsilon_k]\not\prec_{CW}\sum_{k=1}^{m}F_k(x_0)$ for all $u\in X$, there is a point $\overline{u}\in X$ satisfying
\begin{itemize}
\item[(a)]  $\sum_{k=1}^{m}F_k(u)+[0,\sum_{k=1}^{m}\varepsilon_k]\not\prec_{CW}\sum_{k=1}^{m}F_k(\overline{u}),\;\forall u\in X$;
\item[(b)] $\|x_0-\overline{u}\|_X\leq\frac{\sum_{k=1}^{m}\varepsilon_k}{\sum_{k=1}^{m}\sqrt{\varepsilon_k}}$;
\item[(c)] $\sum_{k=1}^{m}F_k(u)+[0, \sum_{k=1}^{m}\sqrt{\varepsilon_k}\|u-\overline{u}\|_X]\not\prec_{CW}\sum_{k=1}^{m}F_k(\overline{u})$, $\forall u\in X$.
\end{itemize}
\end{proposition}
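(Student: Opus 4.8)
The plan is to reduce the statement to the Dancs--Hegedüs--Medvegyev fixed-point principle recorded in Lemma \ref{le2.1}, applied to a carefully chosen set-valued map. Write $\sigma:=\sum_{k=1}^{m}\varepsilon_k$ and $\tau:=\sum_{k=1}^{m}\sqrt{\varepsilon_k}$; since $\varepsilon\in\mathbb{R}^m_+\backslash\{0\}$, at least one $\varepsilon_k>0$ and hence $\tau>0$. Throughout I will translate every interval relation into its center/width components, using that $\sum_{k=1}^{m}F_k(z)+[0,t]$ has center $\sum_{k=1}^{m}F^c_k(z)+\tfrac{t}{2}$ and width $\sum_{k=1}^{m}F^w_k(z)+\tfrac{t}{2}$, so that $\preccurlyeq_{CW}$ and $\prec_{CW}$ become pairs of non-strict, resp.\ strict, scalar inequalities. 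Set $Z:=T(x_0)$, where for $u\in X$
$$T(u):=\Big\{z\in X:\sum_{k=1}^{m}F_k(z)+[0,\tau\|z-u\|_X]\preccurlyeq_{CW}\sum_{k=1}^{m}F_k(u)\Big\}.$$

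Next I would verify the three hypotheses of Lemma \ref{le2.1} on the space $Z$, which is closed in $X$ by Remark \ref{re2.2}(b) and therefore complete. Condition (i): $u\in T(u)$ follows from reflexivity of $\preccurlyeq_{CW}$, and $T(u)$ is closed because both $z\mapsto\sum_k F^c_k(z)+\tfrac{\tau}{2}\|z-u\|_X$ and $z\mapsto\sum_k F^w_k(z)+\tfrac{\tau}{2}\|z-u\|_X$ are lsc; also $u\in T(x_0)$ gives $T(u)\subseteq T(x_0)=Z$, so $T$ maps $Z$ into $2^Z$. Condition (ii) is transitivity: if $y\in T(u)$ and $z\in T(y)$, adding the two pairs of scalar inequalities and invoking $\|z-u\|_X\le\|z-y\|_X+\|y-u\|_X$ yields $z\in T(u)$; this triangle inequality is exactly why the coefficient $\tau\|z-u\|_X$ is the right one. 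The crux is condition (iii): for any chain $u_{n+1}\in T(u_n)$, the width sums $b_n:=\sum_k F^w_k(u_n)$ satisfy $b_{n+1}+\tfrac{\tau}{2}\|u_{n+1}-u_n\|_X\le b_n$; since each $F^w_k\ge0$, the sequence $\{b_n\}$ is nonincreasing and bounded below by $0$, hence convergent, forcing $\tfrac{\tau}{2}\|u_{n+1}-u_n\|_X\le b_n-b_{n+1}\to0$ and thus $\|u_{n+1}-u_n\|_X\to0$.

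Lemma \ref{le2.1} then produces $\overline{u}\in Z=T(x_0)$ with $T(\overline{u})=\{\overline{u}\}$, and it remains to read off (a)--(c). For (c): if some $u\neq\overline{u}$ violated it, then $\sum_k F_k(u)+[0,\tau\|u-\overline{u}\|_X]\prec_{CW}\sum_k F_k(\overline{u})$, which implies the corresponding non-strict relation, i.e.\ $u\in T(\overline{u})=\{\overline{u}\}$, a contradiction (and $u=\overline{u}$ is clear by irreflexivity of $\prec_{CW}$). For (a): $\overline{u}\in T(x_0)$ gives $\sum_k F_k(\overline{u})\preccurlyeq_{CW}\sum_k F_k(x_0)$, so any $u$ with $\sum_k F_k(u)+[0,\sigma]\prec_{CW}\sum_k F_k(\overline{u})$ would yield $\sum_k F_k(u)+[0,\sigma]\prec_{CW}\sum_k F_k(x_0)$ by chaining a strict with a non-strict relation, contradicting the hypothesis at $x_0$. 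For (b): apply the hypothesis at $u=\overline{u}$, which forces at least one of $\sum_k F^c_k(\overline{u})+\tfrac{\sigma}{2}\ge\sum_k F^c_k(x_0)$ or $\sum_k F^w_k(\overline{u})+\tfrac{\sigma}{2}\ge\sum_k F^w_k(x_0)$; combining whichever holds with the matching component of $\overline{u}\in T(x_0)$ (namely $\tfrac{\tau}{2}\|\overline{u}-x_0\|_X\le\sum_k F^c_k(x_0)-\sum_k F^c_k(\overline{u})$, resp.\ the width version) gives $\tfrac{\tau}{2}\|\overline{u}-x_0\|_X\le\tfrac{\sigma}{2}$, which is precisely (b).

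The main obstacle --- and the place where the interval structure genuinely does the work --- is establishing condition (iii) of Lemma \ref{le2.1} \emph{without} any coercivity or lower-boundedness assumption on the objectives: this succeeds only because the width components $F^w_k$ are intrinsically nonnegative, which supplies the lower bound that drives the monotone sequence $\{b_n\}$ to converge. A secondary but essential subtlety is keeping $\overline{u}$ inside $T(x_0)$ by running the lemma on $Z=T(x_0)$ rather than on all of $X$, since the quantitative estimate (b) and the transfer of the hypothesis in (a) both rely on comparing $\overline{u}$ directly with $x_0$.
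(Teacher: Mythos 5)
Your proposal is correct and follows essentially the same route as the paper: the same set-valued map $T$ driven by the Dancs--Heged\"us--Medvegyev principle (Lemma \ref{le2.1}), the same use of the nonnegativity of the widths $F^w_k$ to get condition (iii) without any lower-boundedness hypothesis, and the same derivations of (a)--(c) by combining $\overline{u}\in T(x_0)$ with the hypothesis at $x_0$. The only (cosmetic) difference is that you define $T$ on all of $X$ and use invariance of $Z=T(x_0)$ to get (c) in one step, whereas the paper restricts $T$ to $T_0$ and then separately extends (c) from $T_0$ to $X$ by the same triangle-inequality argument.
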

\begin{proof}
For $\varepsilon=(\varepsilon_1,\varepsilon_2,\cdots,\varepsilon_m)\in\mathbb{R}^m_+\backslash\{0\}$ and $x_0\in X$, let
$$T_0:=\left\{u\in X: \sum_{k=1}^{m}F_k(u)+[0, \sum_{k=1}^{m}\sqrt{\varepsilon_k}\|u-x_0\|_X]\preccurlyeq_{CW}\sum_{k=1}^{m}F_k(x_0)\right\}.$$
Clearly, by Remark \ref{re2.1}, one has $x_0\in T_0$. It follows from Remark \ref{re2.2} that
$$
\sum_{k=1}^{m}F_k(\cdot)+[0,\sum_{k=1}^{m}\sqrt{\varepsilon_k}\|\cdot-x_0\|_X]
$$
is $\preccurlyeq_{CW}$-lsc and thus $T_0$ is closed.

If $T_0=\{x_0\}$, then we take $\overline{u}:=x_0$, this proof is complete.

Otherwise, $T_0\neq\{x_0\}$. Then we can define a mapping $T: T_0\rightarrow 2^{T_0}$ as
$$
T(u):=\left\{v\in T_0: \sum_{k=1}^{m}F_k(v)+[0, \sum_{k=1}^{m}\sqrt{\varepsilon_k}\|v-u\|_X]\preccurlyeq_{CW}\sum_{k=1}^{m}F_k(u)\right\},\quad\forall u\in T_0.
$$
Now we check that $T$ satisfies the conditions (i)-(iii) of Lemma \ref{le2.1}.  Obviously, for every $u\in T_0$, $u\in T(u)$, and $T(u)$ is closed. Moreover, we claim that, for any $u\in T_0$, $v\in T(u)$, $T(v)\subseteq T(u)$. Indeed, for $v\in T(u)$ and $w\in T(v)$, we have
$$\sum_{k=1}^{m}F_k(v)+[0, \sum_{k=1}^{m}\sqrt{\varepsilon_k}\|v-u\|_X]\preccurlyeq_{CW}\sum_{k=1}^{m}F_k(u)$$
and
$$\sum_{k=1}^{m}F_k(w)+[0, \sum_{k=1}^{m}\sqrt{\varepsilon_k}\|w-v\|_X]\preccurlyeq_{CW}\sum_{k=1}^{m}F_k(v).$$
Thus, again Remark \ref{re2.1}, we can see that
$$\sum_{k=1}^{m}F_k(w)+[0, \sum_{k=1}^{m}\sqrt{\varepsilon_k}(\|w-v\|_X+\|v-u\|_X)]\preccurlyeq_{CW}\sum_{k=1}^{m}F_k(u).$$
Since $\|w-u\|_X\leq\|w-v\|_X+\|v-u\|_X$, one has
$[0,\|w-u\|_X]\preccurlyeq_{CW}[0,\|w-v\|_X+\|v-u\|_X]$, which says that

$$\sum_{k=1}^{m}F_k(w)+[0, \sum_{k=1}^{m}\sqrt{\varepsilon_k}\|w-u\|_X]\preccurlyeq_{CW}\sum_{k=1}^{m}F_k(u).$$
Then $w\in T(u)$. Thus, for any $u\in T_0$, $v\in T(u)$, $T(v)\subseteq T(u)$. Next we choose the sequence $\{u_n\}\subseteq T_0$ satisfying $u_{n+1}\in T(u_n)$ for $n=1,2,\cdots$. Then
$$\sum_{k=1}^{m}F_k(u_{n+1})+[0, \sum_{k=1}^{m}\sqrt{\varepsilon_k}\|u_{n+1}-u_n\|_X]\preccurlyeq_{CW}\sum_{k=1}^{m}F_k(u_n),$$
which says that
$\sum_{k=1}^{m}F^c_k(u_{n+1})+\frac{1}{2}\sum_{k=1}^{m}\sqrt{\varepsilon_k}\|u_{n+1}-u_n\|_X\leq\sum_{k=1}^{m}F^c_k(u_n)$
and $\sum_{k=1}^{m}F^w_k(u_{n+1})+\frac{1}{2}\sum_{k=1}^{m}\sqrt{\varepsilon_k}\|u_{n+1}-u_n\|_X\leq\sum_{k=1}^{m}F^w_k(u_n)$.
Hence
$\sum_{k=1}^{m}F^c_k(u_{n+1})\leq\sum_{k=1}^{m}F^c_k(u_n)$ and $\sum_{k=1}^{m}F^w_k(u_{n+1})\leq\sum_{k=1}^{m}F^w_k(u_n)$. Thanks to $F^w_k(u_n)\geq0$, the sequence $\{\sum_{k=1}^{m}F^w_k(u_n)\}$ is convergent, which implies that $\|u_n-u_{n+1}\|_X\rightarrow0$. Therefore, employing Lemma \ref{le2.1}, there is $\overline{u}\in T_0$ satisfying $T(\overline{u})=\{\overline{u}\}$. Thus, one has
\begin{equation}\label{eq3.1}
\sum_{k=1}^{m}F_k(\overline{u})+[0, \sum_{k=1}^{m}\sqrt{\varepsilon_k}\|\overline{u}-x_0\|_X]\preccurlyeq_{CW}\sum_{k=1}^{m}F_k(x_0)
\end{equation}
and
\begin{equation}\label{eq3.2}
\sum_{k=1}^{m}F_k(u)+[0, \sum_{k=1}^{m}\sqrt{\varepsilon_k}\|u-\overline{u}\|_X]\not\preccurlyeq_{CW}\sum_{k=1}^{m}F_k(\overline{u}),\;\forall u\in T_0\backslash\{\overline{u}\}.
\end{equation}
It follows from \eqref{eq3.1} that
$$
\sum_{k=1}^{m}F^c_k(\overline{u})\leq\sum_{k=1}^{m}F^c_k(\overline{u})+\frac{1}{2}\sum_{k=1}^{m}\sqrt{\varepsilon_k}
\|\overline{u}-x_0\|_X\leq\sum_{k=1}^{m}F^c_k(x_0),
$$
$$
\sum_{k=1}^{m}F^w_k(\overline{u})\leq\sum_{k=1}^{m}F^w_k(\overline{u})+\frac{1}{2}\sum_{k=1}^{m}\sqrt{\varepsilon_k}
\|\overline{u}-x_0\|_X\leq\sum_{k=1}^{m}F^w_k(x_0).
$$
 If the part (a) is not true, then there exists $u'\in X$ satisfying
$\sum_{k=1}^{m}F_k(u')+[0,\sum_{k=1}^{m}\varepsilon_k]\prec_{CW}\sum_{k=1}^{m}F_k(\overline{u})$,
which says that
$\sum_{k=1}^{m}F^c_k(u')+\frac{1}{2}\sum_{k=1}^{m}\varepsilon_k<\sum_{k=1}^{m}F^c_k(\overline{u})$,
and $\sum_{k=1}^{m}F^w_k(u')+\frac{1}{2}\sum_{k=1}^{m}\varepsilon_k<\sum_{k=1}^{m}F^w_k(\overline{u})$.
Then $\sum_{k=1}^{m}F^c_k(u')+\frac{1}{2}\sum_{k=1}^{m}\varepsilon_k<\sum_{k=1}^{m}F^c_k(x_0)$
and  $\sum_{k=1}^{m}F^w_k(u')+\frac{1}{2}\sum_{k=1}^{m}\varepsilon_k<\sum_{k=1}^{m}F^w_k(x_0)$.
Now, due to the notion of $\prec_{CW}$, we know that
$\sum_{k=1}^{m}F_k(u')+[0,\sum_{k=1}^{m}\varepsilon_k]\prec_{CW}\sum_{k=1}^{m}F_k(x_0)$, which contradicts with the assumption. Thus, the part (a) is true.

Next, if the part (b) is not true, then $\|x_0-\overline{u}\|_X>\frac{\sum_{k=1}^{m}\varepsilon_k}{\sum_{k=1}^{m}\sqrt{\varepsilon_k}}$. Going back to (\ref{eq3.1}), we have
$$
\sum_{k=1}^{m}F^c_k(\overline{u})+\frac{1}{2}\sum_{k=1}^{m}\varepsilon_k
<\sum_{k=1}^{m}F^c_k(\overline{u})+\frac{1}{2}\sum_{k=1}^{m}\sqrt{\varepsilon_k}
\|\overline{u}-x_0\|_X\leq\sum_{k=1}^{m}F^c_k(x_0),
$$
$$
\sum_{k=1}^{m}F^w_k(\overline{u})+\frac{1}{2}\sum_{k=1}^{m}\varepsilon_k
<\sum_{k=1}^{m}F^w_k(\overline{u})+\frac{1}{2}\sum_{k=1}^{m}\sqrt{\varepsilon_k}
\|\overline{u}-x_0\|_X\leq\sum_{k=1}^{m}F^w_k(x_0),
$$
which means that
$$\sum_{k=1}^{m}F_k(\overline{u})+[0, \sum_{k=1}^{m}\varepsilon_k]\prec_{CW}\sum_{k=1}^{m}F_k(x_0),$$
which is a contradiction. Thus, the part (b) is true.

Finally, we show that the part (c) is true. To this end, we first prove that
\begin{equation}\label{eq3.3}
\sum_{k=1}^{m}F_k(u)+[0, \sum_{k=1}^{m}\sqrt{\varepsilon_k}\|u-\overline{u}\|_X]\not\preccurlyeq_{CW}\sum_{k=1}^{m}F_k(\overline{u}),\; \forall u\in X\backslash\{\overline{u}\}.
\end{equation}
Noting that (\ref{eq3.2}), if there is $v\in X\backslash T_0$ satisfying
$$
\sum_{k=1}^{m}F_k(v)+[0, \sum_{k=1}^{m}\sqrt{\varepsilon_k}\|v-\overline{u}\|_X]\preccurlyeq_{CW}\sum_{k=1}^{m}F_k(\overline{u}),
$$
then this together with (\ref{eq3.1}) gives
$$
\sum_{k=1}^{m}F_k(v)+[0, \sum_{k=1}^{m}\sqrt{\varepsilon_k}(\|v-\overline{u}\|_X+\|\overline{u}-x_0\|_X)]\preccurlyeq_{CW}\sum_{k=1}^{m}F_k(x_0).
$$
Thus, we deduce that
$$\sum_{k=1}^{m}F_k(v)+[0, \sum_{k=1}^{m}\sqrt{\varepsilon_k}\|v-x_0\|_X]\preccurlyeq_{CW}\sum_{k=1}^{m}F_k(x_0),$$
which contradicts with $v\not\in T_0$. Hence we show
(\ref{eq3.3}), which means that
$$
\sum_{k=1}^{m}F_k(u)+[0, \sum_{k=1}^{m}\sqrt{\varepsilon_k}\|u-\overline{u}\|_X]\not\prec_{CW}\sum_{k=1}^{m}F_k(\overline{u}),\; \forall u\in X\backslash\{\overline{u}\}.
$$
Moreover, we can deduce that $$
\sum_{k=1}^{m}F_k(u)+[0, \sum_{k=1}^{m}\sqrt{\varepsilon_k}\|u-\overline{u}\|_X]\not\prec_{CW}\sum_{k=1}^{m}F_k(\overline{u}),\; \forall u\in X,
$$
which shows that the part (c) is true.
\end{proof}
\begin{remark}\label{re3.1}
It should be noted that we do not need assume the lower boundedness of $F_k$ since the fact that $F^w_k(u)\geq0$.
\end{remark}

\begin{proposition}\label{p3.2}
Let $X$ be a real Banach space, and $F_k:X\to \mathcal{I}$ be $\preccurlyeq_{CW}$-lsc for all $k\in K$.
Then for $\epsilon>0$ and $x_0\in X$ satisfying $x_0\in M(F,X,\varepsilon)$, where
$\varepsilon:=(\epsilon,\epsilon,\cdots,\epsilon)\in\mathbb{R}^m_+$,
there is a point $\overline{u}\in X$ satisfying
\begin{itemize}
\item[(a)]  $\overline{u}\in M(F,X,\varepsilon)$, where $F(u):=(F_1(u),F_2(u),\cdots,F_m(u))$;
\item[(b)] $\|x_0-\overline{u}\|_X\leq\sqrt{\epsilon}$;
\item[(c)]  $\overline{u}\in QM(F,X,\varepsilon')$, where $\varepsilon':=(\sqrt{\epsilon},\sqrt{\epsilon},\cdots,\sqrt{\epsilon})\in\mathbb{R}^m_+$.
\end{itemize}
\end{proposition}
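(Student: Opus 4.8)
The plan is to mimic the proof of Proposition~\ref{p3.1}, but to keep the $m$ interval constraints \emph{separate} instead of collapsing them into a single scalar descent, and then to invoke the hypothesis $x_0\in M(F,X,\varepsilon)$ at the two places where a summed argument would stall. With $\varepsilon=(\epsilon,\dots,\epsilon)$ and $\varepsilon'=(\sqrt{\epsilon},\dots,\sqrt{\epsilon})$, I would set
$$T_0:=\{u\in X: F_k(u)+[0,\sqrt{\epsilon}\|u-x_0\|_X]\preccurlyeq_{CW}F_k(x_0)\ \text{for all}\ k\in K\},$$
which contains $x_0$ and, by Remark~\ref{re2.2}(a)--(b) (each perturbed map is $\preccurlyeq_{CW}$-lsc, so its sublevel set is closed, and a finite intersection of closed sets is closed), is closed. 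If $T_0=\{x_0\}$ one takes $\overline u:=x_0$; otherwise one defines $T:T_0\to 2^{T_0}$ by $T(u):=\{v\in T_0: F_k(v)+[0,\sqrt{\epsilon}\|v-u\|_X]\preccurlyeq_{CW}F_k(u)\ \text{for all}\ k\in K\}$.

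First I would check the hypotheses of Lemma~\ref{le2.1} for $T$. Reflexivity and closedness of $T(u)$ are immediate; the inclusion $T(v)\subseteq T(u)$ for $v\in T(u)$ follows from Remark~\ref{re2.1}(ii)--(iii) together with $\|w-u\|_X\le\|w-v\|_X+\|v-u\|_X$, exactly as in Proposition~\ref{p3.1} but applied to each index $k$ separately. For condition~(iii), a chain $u_{n+1}\in T(u_n)$ gives $F^w_k(u_{n+1})+\tfrac{\sqrt{\epsilon}}{2}\|u_{n+1}-u_n\|_X\le F^w_k(u_n)$ for every $k$; summing over $k$ and using $F^w_k\ge0$ (so $\{\sum_k F^w_k(u_n)\}$ is nonincreasing and bounded below, cf.\ Remark~\ref{re3.1}) forces $\|u_{n+1}-u_n\|_X\to0$. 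Lemma~\ref{le2.1} then yields $\overline u\in T_0$ with $T(\overline u)=\{\overline u\}$, and from $\overline u\in T_0$ I record the key componentwise estimate
$$F_k(\overline u)+[0,\sqrt{\epsilon}\|\overline u-x_0\|_X]\preccurlyeq_{CW}F_k(x_0),\qquad k\in K,$$
whence $F^c_k(\overline u)\le F^c_k(x_0)$ and $F^w_k(\overline u)\le F^w_k(x_0)$ for each $k$.

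These per-component inequalities are what deliver the three conclusions. For (b): if $\|\overline u-x_0\|_X>\sqrt{\epsilon}$ then $\tfrac{\sqrt{\epsilon}}{2}\|\overline u-x_0\|_X>\tfrac{\epsilon}{2}$, so the estimate gives $F^c_k(\overline u)+\tfrac{\epsilon}{2}<F^c_k(x_0)$ and $F^w_k(\overline u)+\tfrac{\epsilon}{2}<F^w_k(x_0)$ for all $k$, i.e.\ $F_k(\overline u)+[0,\epsilon]\prec_{CW}F_k(x_0)$ for all $k$, contradicting $x_0\in M(F,X,\varepsilon)$. For (a): if some $u'$ satisfied $F_k(u')+[0,\epsilon]\prec_{CW}F_k(\overline u)$ for all $k$, then combining with $F^c_k(\overline u)\le F^c_k(x_0)$ and $F^w_k(\overline u)\le F^w_k(x_0)$ would give $F_k(u')+[0,\epsilon]\prec_{CW}F_k(x_0)$ for all $k$, again contradicting $x_0\in M(F,X,\varepsilon)$; hence $\overline u\in M(F,X,\varepsilon)$. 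For (c): I would first upgrade $T(\overline u)=\{\overline u\}$ from $T_0$ to all of $X$, since any $v\in X\setminus T_0$ with $F_k(v)+[0,\sqrt{\epsilon}\|v-\overline u\|_X]\preccurlyeq_{CW}F_k(\overline u)$ for all $k$ would, via Remark~\ref{re2.1}(iii), the displayed estimate and the triangle inequality, satisfy $F_k(v)+[0,\sqrt{\epsilon}\|v-x_0\|_X]\preccurlyeq_{CW}F_k(x_0)$, i.e.\ $v\in T_0$, a contradiction. Thus no $u\neq\overline u$ dominates $\overline u$ in the $\preccurlyeq_{CW}$ (hence $\prec_{CW}$) sense with the quasi-perturbation, while $u=\overline u$ cannot do so strictly, giving $\overline u\in QM(F,X,\varepsilon')$.

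The main obstacle is precisely the decision \emph{not} to reduce to the scalar functional $\sum_k F_k$ and invoke Proposition~\ref{p3.1} as a black box. Doing so is tempting but insufficient here: passing to the sum both needs $x_0$ to be summed-$\varepsilon$-minimal (genuinely stronger than $x_0\in M(F,X,\varepsilon)$) and erases the componentwise inequalities $F^c_k(\overline u)\le F^c_k(x_0)$, $F^w_k(\overline u)\le F^w_k(x_0)$ on which the contradictions in (a) and (b) rest. Keeping the $m$ constraints separate throughout the iteration — while still using only the summed widths to drive condition~(iii) of Lemma~\ref{le2.1} — is the crux, and verifying that $T$ genuinely satisfies the transitivity condition~(ii) componentwise is the most delicate bookkeeping step.
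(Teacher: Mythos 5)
Your proposal is correct and follows essentially the same route as the paper: the paper's own proof introduces the componentwise order $\preccurlyeq$ on $m$-tuples, defines the set $\Gamma_0$ (identical to your $T_0$), and states that the Dancs--Heged\"us--Medvegyev argument of Proposition \ref{p3.1} applied to $\Gamma_0$ yields (a)--(c); you have simply written out the details that the paper leaves implicit, including the correct use of the hypothesis $x_0\in M(F,X,\varepsilon)$ in parts (a) and (b).
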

\begin{proof}
Now we first define the orders $\preccurlyeq$ and $\prec$ as follows: for any $u,v\in X$,
$$
F(u)\preccurlyeq F(v)\Leftrightarrow F_k(u)\preccurlyeq_{CW} F_k(v),\;\forall k\in K,\; \textrm{and}\;
F(u)\prec F(v)\Leftrightarrow F_k(u)\prec_{CW} F_k(v),\;\forall k\in K.
$$
Moreover, the addition between $F(u)$ and $F(v)$ is given by
$$F(u)+F(v):=(F_1(u)+F_1(v),F_2(u)+F_2(v),\cdots,F_m(u)+F_m(v)).$$
For $\epsilon>0$ and $x_0\in X$, let
$$\Gamma_0:=\{v\in X: F(v)+([0, \sqrt{\epsilon}\|v-x_0\|_X],\cdots,[0, \sqrt{\epsilon}\|v-x_0\|_X])\preccurlyeq F(x_0)\}.$$
Then a similar proofs of Proposition \ref{p3.1} applied to $\Gamma_0$, we can show the parts (a)-(c).
\end{proof}

When $K=\{1\}$, Propositions \ref{p3.1} and \ref{p3.2} are reduced to the following result.
\begin{corollary}\label{co3.1}
Let $X$ be a real Banach space, and $F:X\to \mathcal{I}$ be a $\preccurlyeq_{CW}$-lsc IVF.
Then for
$\epsilon>0$ and $x_0\in X$ satisfying $F(u)+[0,\epsilon]\not\prec_{CW}F(x_0)$ for all $u\in X$, there is a point $\overline{u}\in X$ satisfying
\begin{itemize}
\item[(a)]  $F(u)+[0,\epsilon]\not\prec_{CW}F(\overline{u}),\;\forall u\in X$;
\item[(b)] $\|x_0-\overline{u}\|_X\leq\sqrt{\epsilon}$;
\item[(c)] $F(u)+[0, \sqrt{\epsilon}\|u-\overline{u}\|_X]\not\prec_{CW}F(\overline{u})$, $\forall u\in X$.
\end{itemize}
\end{corollary}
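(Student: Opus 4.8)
The plan is to obtain the corollary as the immediate specialization of Proposition \ref{p3.1} (equivalently Proposition \ref{p3.2}) to the single-objective case $K=\{1\}$, i.e. $m=1$. First I would set $\varepsilon_1:=\epsilon$, so that the vector $\varepsilon\in\mathbb{R}^m_+\backslash\{0\}$ appearing in Proposition \ref{p3.1} becomes the scalar $\epsilon>0$, and I would identify $F$ with $F_1$. Under this identification every sum $\sum_{k=1}^{m}(\cdot)$ collapses to its single term, and the standing hypothesis $\sum_{k=1}^{m}F_k(u)+[0,\sum_{k=1}^{m}\varepsilon_k]\not\prec_{CW}\sum_{k=1}^{m}F_k(x_0)$ for all $u\in X$ becomes exactly $F(u)+[0,\epsilon]\not\prec_{CW}F(x_0)$ for all $u\in X$, which is precisely the hypothesis of the corollary.

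Next I would read off the three conclusions from those of Proposition \ref{p3.1}. Conclusion (a) reduces verbatim to $F(u)+[0,\epsilon]\not\prec_{CW}F(\overline{u})$ for all $u\in X$, and conclusion (c) reduces to $F(u)+[0,\sqrt{\epsilon}\|u-\overline{u}\|_X]\not\prec_{CW}F(\overline{u})$ for all $u\in X$; these are items (a) and (c). The only step requiring a short computation is (b): the bound $\|x_0-\overline{u}\|_X\leq\frac{\sum_{k=1}^{m}\varepsilon_k}{\sum_{k=1}^{m}\sqrt{\varepsilon_k}}$ specializes to $\frac{\varepsilon_1}{\sqrt{\varepsilon_1}}=\frac{\epsilon}{\sqrt{\epsilon}}=\sqrt{\epsilon}$, which is item (b).

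There is essentially no obstacle here, since the corollary is a restatement of the propositions under $m=1$; all the genuine work (the construction of the set-valued map $T$ and the invocation of Lemma \ref{le2.1}) already resides in the proof of Proposition \ref{p3.1}. The only point to be careful about is the notational bookkeeping, namely checking that the scalar $\epsilon$ simultaneously plays the role of $\varepsilon_1$ and of the single component of the vector $\varepsilon$, so that the quotient in (b) collapses correctly to $\sqrt{\epsilon}$.
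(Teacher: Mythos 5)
Your proposal is correct and coincides with the paper's own treatment: the paper presents Corollary \ref{co3.1} precisely as the specialization of Propositions \ref{p3.1} and \ref{p3.2} to $K=\{1\}$, with all the substantive work residing in the proof of Proposition \ref{p3.1}. Your check that the bound $\frac{\sum_{k=1}^{m}\varepsilon_k}{\sum_{k=1}^{m}\sqrt{\varepsilon_k}}$ collapses to $\sqrt{\epsilon}$ is exactly the only computation needed.
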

\begin{remark}\label{re3.2}
We would like to mention that Corollary \ref{co3.1} is different from Theorem 3.1 of \cite{ZH} in the following aspects:
(i) the interval orders are different from the ones of \cite{ZH}; (ii) the lower boundedness of $F$ is removed.
\end{remark}

The following theorems give some sufficient conditions for ensuring the existence of weak $\varepsilon$-quasi-minimal solutions of MIOPs.
\begin{theorem}\label{th3.2}
Let $F_k:\mathbb{R}^n\to \mathcal{I}$ be $\preccurlyeq_{CW}$-lsc for all $k\in K$, and $g_j$ be lsc on $\mathbb{R}^n$ for all $j\in J$.
Then for every $\varepsilon:=(\varepsilon_1,\varepsilon_2,\cdots,\varepsilon_m)\in\mathbb{R}^m_+\backslash\{0\}$,
$QM(F,S,\varepsilon')\neq\emptyset$, where $\varepsilon':=(\sqrt{\varepsilon_1},\sqrt{\varepsilon_2},\cdots,\sqrt{\varepsilon_m})$.
\end{theorem}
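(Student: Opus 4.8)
The plan is to run an Ekeland-type argument directly on the feasible set $S$, regarded as a complete metric space, and to invoke the fixed-point Lemma \ref{le2.1} exactly as in the proof of Proposition \ref{p3.1}, but now carrying the \emph{coordinatewise} penalties $\sqrt{\varepsilon_k}$ instead of a single common weight. First I would note that since each $g_j$ is lsc, every set $\{u\in\mathbb{R}^n:g_j(u)\le0\}$ is closed, so $S=\bigcap_{j\in J}\{u:g_j(u)\le0\}$ is closed and $(S,\|\cdot\|)$ is a complete metric space. By Theorem \ref{th3.1} we have $M(F,S,\varepsilon)\neq\emptyset$, so we may fix some $x_0\in S$. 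Mimicking Proposition \ref{p3.1}, set
$$\Gamma_0:=\{v\in S:F_k(v)+[0,\sqrt{\varepsilon_k}\|v-x_0\|]\preccurlyeq_{CW}F_k(x_0),\ \forall k\in K\}.$$
By Remark \ref{re2.1} one has $x_0\in\Gamma_0$, and by Remark \ref{re2.2}(b) together with closedness of $S$, $\Gamma_0$ is closed, hence complete. Then define $T:\Gamma_0\to2^{\Gamma_0}$ by
$$T(u):=\{v\in\Gamma_0:F_k(v)+[0,\sqrt{\varepsilon_k}\|v-u\|]\preccurlyeq_{CW}F_k(u),\ \forall k\in K\}.$$

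Next I would verify the three hypotheses of Lemma \ref{le2.1}, following the pattern of Proposition \ref{p3.1}. Reflexivity $u\in T(u)$ and closedness of $T(u)$ are immediate from Remarks \ref{re2.1} and \ref{re2.2}; the inclusion $T(v)\subseteq T(u)$ for $v\in T(u)$ follows from transitivity of $\preccurlyeq_{CW}$, the additivity rules in Remark \ref{re2.1}, and the triangle inequality $\|w-u\|\le\|w-v\|+\|v-u\|$. The place where the unequal weights demand care is hypothesis (iii): for a chain $u_{n+1}\in T(u_n)$, reading off the widths gives $F^w_k(u_{n+1})+\tfrac12\sqrt{\varepsilon_k}\|u_{n+1}-u_n\|\le F^w_k(u_n)$ for each $k$, and summing over $k$ yields
$$\sum_{k=1}^m F^w_k(u_{n+1})+\tfrac12\Big(\sum_{k=1}^m\sqrt{\varepsilon_k}\Big)\|u_{n+1}-u_n\|\le\sum_{k=1}^m F^w_k(u_n).$$
Since $\sum_k F^w_k(u_n)\ge0$ is nonincreasing it converges, and because $\varepsilon\in\mathbb{R}^m_+\backslash\{0\}$ forces $\sum_k\sqrt{\varepsilon_k}>0$, I conclude $\|u_{n+1}-u_n\|\to0$. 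Hence Lemma \ref{le2.1} supplies $\overline{u}\in\Gamma_0$ with $T(\overline{u})=\{\overline{u}\}$.

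It then remains to upgrade $T(\overline{u})=\{\overline{u}\}$ to global quasi-minimality on all of $S$. I would argue by contradiction: if $\overline{u}\notin QM(F,S,\varepsilon')$, there is $z\in S$ with $F_k(z)+[0,\sqrt{\varepsilon_k}\|z-\overline{u}\|]\prec_{CW}F_k(\overline{u})$ for all $k$; in particular $z\neq\overline{u}$, and since $\prec_{CW}$ implies $\preccurlyeq_{CW}$, the same relations hold with $\preccurlyeq_{CW}$. Combining these with $\overline{u}\in\Gamma_0$ through Remark \ref{re2.1} and $\|z-x_0\|\le\|z-\overline{u}\|+\|\overline{u}-x_0\|$ shows $z\in\Gamma_0$, whence $z\in T(\overline{u})\setminus\{\overline{u}\}$, contradicting $T(\overline{u})=\{\overline{u}\}$. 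Therefore $\overline{u}\in QM(F,S,\varepsilon')$ and the set is nonempty.

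The hard part will be precisely hypothesis (iii) of Lemma \ref{le2.1} under general, possibly partly-zero, weights $\varepsilon_k$: one cannot extract $\|u_{n+1}-u_n\|\to0$ coordinate by coordinate, and it is the summation step combined with $\varepsilon\neq0$ that rescues the argument. A secondary subtlety is that $S$ is only a complete metric space and not a Banach space, so Proposition \ref{p3.2} cannot be applied verbatim and the Ekeland construction must be rebuilt on $S$ itself with the coordinatewise penalties matching $\varepsilon'=(\sqrt{\varepsilon_1},\dots,\sqrt{\varepsilon_m})$.
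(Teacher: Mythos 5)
Your proof is correct, and it reaches the conclusion by a genuinely different (though closely related) route from the paper. The paper's own proof is very short: it applies Proposition \ref{p3.1}(c) to the closed set $S$ to produce $\overline{u}$ with the \emph{aggregated} property $\sum_{k}F_k(u)+[0,\sum_k\sqrt{\varepsilon_k}\|u-\overline{u}\|]\not\prec_{CW}\sum_k F_k(\overline{u})$ for all $u\in S$, and then observes that a coordinatewise violation of $\varepsilon'$-quasi-minimality would, by Remark \ref{re2.1}(iii), sum to a violation of this aggregated property. You instead rebuild the Dancs--Heged\"us--Medvegyev fixed-point argument directly on $S$ with the \emph{coordinatewise} order (the route of Proposition \ref{p3.2} rather than Proposition \ref{p3.1}), keeping the penalties $[0,\sqrt{\varepsilon_k}\|\cdot\|]$ separate and only summing at the one place where it is genuinely needed, namely to get $\|u_{n+1}-u_n\|\to0$ from the monotonicity of $\sum_k F^w_k(u_n)\ge 0$ when some $\varepsilon_k$ may vanish --- that is exactly the right observation, and it is the same mechanism the paper uses inside Proposition \ref{p3.1}. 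What the paper's route buys is brevity (Proposition \ref{p3.1} is a black box); what yours buys is self-containedness and the avoidance of one hypothesis: you never need a starting point $x_0$ satisfying the approximate-minimality condition of Proposition \ref{p3.1}, since only the analogue of conclusion (c) is used and that depends only on $T(\overline{u})=\{\overline{u}\}$ together with $\overline{u}\in\Gamma_0$ (so your appeal to Theorem \ref{th3.1} to choose $x_0$ is harmless but unnecessary --- any $x_0\in S$ works). Both proofs must quietly note that the Banach-space statement is being run on the closed, hence complete, subset $S$ of $\mathbb{R}^n$; you make this explicit where the paper glosses over it.
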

\begin{proof}
Since $g_i$ is lsc on $\mathbb{R}^n$, the set $S$ is closed. Applying Proposition \ref{p3.1} to $S$,
there is $\overline{u}\in S$ such that
\begin{equation}\label{eq3.4}
\sum_{k=1}^{m}F_k(u)+[0, \sum_{k=1}^{m}\sqrt{\varepsilon_k}\|u-\overline{u}\|]\not\prec_{CW}\sum_{k=1}^{m}F_k(\overline{u}),\;\forall u\in S.
\end{equation}
Now we claim that $\overline{u}\in QM(F,S,\varepsilon')$. If not, then there exists $v\in S$
satisfying
$$F_k(v)+[0, \sqrt{\varepsilon_k}\|v-\overline{u}\|]\prec_{CW}F_k(\overline{u}),\;\forall k\in K.$$
By Remark \ref{re2.1}, we have
$$
\sum_{k=1}^{m}F_k(v)+[0, \sum_{k=1}^{m}\sqrt{\varepsilon_k}\|v-\overline{u}\|]\prec_{CW}\sum_{k=1}^{m}F_k(\overline{u}),
$$
which contradicts with (\ref{eq3.4}). Thus, $QM(F,S,\varepsilon')\neq\emptyset$.
\end{proof}

\begin{theorem}\label{th3.3}
Let $\varepsilon\in\mathbb{R}^m_+\backslash\{0\}$ and $\overline{u}\in S$. Assume that
$$
\sum_{k=1}^{m}
(F^c_k(u)+F^w_k(u))+\sum_{k=1}^{m}\varepsilon_k\|u-\overline{u}\|\geq \sum_{k=1}^{m}
(F^c_k(\overline{u})+F^w_k(\overline{u})),\;\forall u\in S.
$$
Then $QM(F,S,\varepsilon)\neq\emptyset$.
\end{theorem}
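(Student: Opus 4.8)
The plan is to verify that the given point $\overline{u}$ itself is a weak $\varepsilon$-quasi-minimal solution, so that $QM(F,S,\varepsilon)$ is nonempty simply by exhibiting one of its members. I would argue by contradiction, assuming $\overline{u}\notin QM(F,S,\varepsilon)$. By Definition \ref{de2.5}(iii) this produces a point $z\in S$ satisfying
$$F_k(z)+[0,\varepsilon_k\|z-\overline{u}\|]\prec_{CW}F_k(\overline{u}),\quad\forall k\in K.$$

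The next step is to translate each $\prec_{CW}$ relation into scalar inequalities through centers and widths. The interval $[0,\varepsilon_k\|z-\overline{u}\|]$ has both center and width equal to $\frac{1}{2}\varepsilon_k\|z-\overline{u}\|$, so $F_k(z)+[0,\varepsilon_k\|z-\overline{u}\|]$ has center $F^c_k(z)+\frac{1}{2}\varepsilon_k\|z-\overline{u}\|$ and width $F^w_k(z)+\frac{1}{2}\varepsilon_k\|z-\overline{u}\|$. By the definition of $\prec_{CW}$, for every $k\in K$ we then obtain
$$F^c_k(z)+\frac{1}{2}\varepsilon_k\|z-\overline{u}\|<F^c_k(\overline{u}),\qquad F^w_k(z)+\frac{1}{2}\varepsilon_k\|z-\overline{u}\|<F^w_k(\overline{u}).$$

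I would then sum each collection of strict inequalities over $k\in K$ and add the resulting center estimate to the resulting width estimate. The two half-contributions combine into $\sum_{k=1}^m\varepsilon_k\|z-\overline{u}\|$, yielding
$$\sum_{k=1}^m\bigl(F^c_k(z)+F^w_k(z)\bigr)+\sum_{k=1}^m\varepsilon_k\|z-\overline{u}\|<\sum_{k=1}^m\bigl(F^c_k(\overline{u})+F^w_k(\overline{u})\bigr).$$
Invoking the hypothesis with $u=z\in S$ furnishes the opposite non-strict inequality, which is the desired contradiction; hence no such $z$ exists and $\overline{u}\in QM(F,S,\varepsilon)$, so the set is nonempty.

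I do not anticipate any genuine difficulty: the argument is a single contradiction obtained by converting $\prec_{CW}$ into scalar comparisons and summing. The only place warranting care is the arithmetic bookkeeping---correctly identifying the center and width of the shifted interval and checking that adding the summed center inequalities to the summed width inequalities reproduces precisely the factor $\sum_{k=1}^m\varepsilon_k\|z-\overline{u}\|$ appearing in the assumption.
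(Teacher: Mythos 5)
Your proposal is correct and follows essentially the same route as the paper: assume $\overline{u}\notin QM(F,S,\varepsilon)$, extract the strict center and width inequalities from the $\prec_{CW}$ relations, sum over $k$ and add them to contradict the hypothesis (the paper merely sums the interval inequalities first via Remark \ref{re2.1} before passing to scalars, which is the same computation in a different order).
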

\begin{proof}
Assume that $\overline{u}\not\in QM(F,S,\varepsilon)$. Then there exists $v\in S$ satisfying
$$F_k(v)+[0,\varepsilon_k\|v-\overline{u}\|]\prec_{CW}F_k(\overline{u}),\;\forall k\in K.$$
Thanks to Remark \ref{re2.1}, one has
$$
\sum_{k=1}^{m}F_k(v)+[0, \sum_{k=1}^{m}\varepsilon_k\|v-\overline{u}\|]\prec_{CW}\sum_{k=1}^{m}F_k(\overline{u}).
$$
By the concept of $\prec_{CW}$, one has
$$
\sum_{k=1}^{m}(F^c_k(v)+F^w_k(v))+\sum_{k=1}^{m}\varepsilon_k\|v-\overline{u}\|<\sum_{k=1}^{m}(F^c_k(\overline{u})+F^w_k(\overline{u})),
$$
which is a contradiction. Thus, $\overline{u}\in QM(F,S,\varepsilon)$ and so $QM(F,S,\varepsilon)\neq\emptyset$.
\end{proof}

\begin{theorem}\label{th3.4}
Let $F_k:\mathbb{R}^n\to \mathcal{I}$ be $\preccurlyeq_{CW}$-lsc for all $k\in K$, and $g_j$ be lsc on $X$ for all $j\in J$.
Let $\varepsilon_0>0$ with $\varepsilon:=(\varepsilon_0,\varepsilon_0,\cdots,\varepsilon_0)\in\mathbb{R}^m_+$. Then there is $\overline{u}\in S$ satisfying
$ M(F,\overline{B}(\overline{u},\sqrt{\varepsilon_0}),\varepsilon)\neq\emptyset$.
\end{theorem}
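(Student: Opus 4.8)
The plan is to obtain the conclusion by composing the two earlier results Theorem \ref{th3.2} and Proposition \ref{p2.1}, which have been set up precisely so that their output and input parameters match. The only genuine content of this theorem is recognizing that the square-root rescaling appearing in Theorem \ref{th3.2} aligns exactly with the hypothesis of Proposition \ref{p2.1} when all the components of $\varepsilon$ are taken equal to a single $\varepsilon_0>0$.

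First I would record that the hypotheses of Theorem \ref{th3.2} are met: the $F_k$ are $\preccurlyeq_{CW}$-lsc for all $k\in K$ and the $g_j$ are lsc, and since $\varepsilon_0>0$ the vector $\varepsilon=(\varepsilon_0,\varepsilon_0,\cdots,\varepsilon_0)$ lies in $\mathbb{R}^m_+\backslash\{0\}$. Applying Theorem \ref{th3.2} to this $\varepsilon$ then yields $QM(F,S,\varepsilon')\neq\emptyset$, where, because every component of $\varepsilon$ equals $\varepsilon_0$, the rescaled vector is $\varepsilon'=(\sqrt{\varepsilon_0},\sqrt{\varepsilon_0},\cdots,\sqrt{\varepsilon_0})$. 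Hence I may fix a point $\overline{u}\in QM(F,S,\varepsilon')$.

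Next I would feed this $\overline{u}$ into Proposition \ref{p2.1}. That proposition is stated with exactly the pair $\big(\varepsilon'=(\sqrt{\varepsilon_0},\cdots,\sqrt{\varepsilon_0}),\ \varepsilon=(\varepsilon_0,\cdots,\varepsilon_0)\big)$, so from $\overline{u}\in QM(F,S,\varepsilon')$ it delivers $\overline{u}\in M\big(F,\overline{B}(\overline{u},\sqrt{\varepsilon_0}),\varepsilon\big)$. In particular the set $M\big(F,\overline{B}(\overline{u},\sqrt{\varepsilon_0}),\varepsilon\big)$ contains $\overline{u}$ and is therefore nonempty, which is the desired conclusion.

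I do not expect any substantive obstacle here, since all the analytic work (the multiobjective interval-valued variational principle underlying Theorem \ref{th3.2}, and the scaling estimate in Proposition \ref{p2.1}) has already been carried out. The only point requiring a moment's care is bookkeeping: one must verify that the $\varepsilon'$ produced by Theorem \ref{th3.2} is literally the $\varepsilon'$ demanded by Proposition \ref{p2.1}, and that the same $\overline{u}$ is used as both the quasi-minimal point and the center of the ball $\overline{B}(\overline{u},\sqrt{\varepsilon_0})$; both identifications hold by construction, so the proof reduces to this two-step chaining.
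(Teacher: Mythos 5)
Your proposal is correct and follows exactly the paper's own argument: apply Theorem \ref{th3.2} with $\varepsilon=(\varepsilon_0,\dots,\varepsilon_0)$ to obtain $\overline{u}\in QM(F,S,\varepsilon')$ with $\varepsilon'=(\sqrt{\varepsilon_0},\dots,\sqrt{\varepsilon_0})$, then invoke Proposition \ref{p2.1} to conclude $\overline{u}\in M(F,\overline{B}(\overline{u},\sqrt{\varepsilon_0}),\varepsilon)$. The bookkeeping check you mention is the whole content of the step, and it holds as you describe.
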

\begin{proof}
For $\varepsilon=(\varepsilon_0,\varepsilon_0,\cdots,\varepsilon_0)\in\mathbb{R}^m_+$, by the proof of Theorem \ref{th3.2}, we know that there exists $\overline{u}\in S$ satisfying $\overline{u}\in QM(F,S,\varepsilon')$, where $\varepsilon':=(\sqrt{\varepsilon_0},\sqrt{\varepsilon_0},\cdots,\sqrt{\varepsilon_0})\in\mathbb{R}^m_+$.
Thanks to Proposition \ref{p2.1}, one has $\overline{u}\in M(F,\overline{B}(\overline{u},\sqrt{\varepsilon_0}),\varepsilon)$.
\end{proof}

\section{KKT optimality conditions}\label{sec4}
\setcounter{equation}{0}

In this section, we discuss KKT optimality conditions and approximate KKT optimality conditions for approximate solutions of MIOP.

Throughout this section, assume that $F_k$ and $g_j$ are locally Lipschitz continuous on $\mathbb{R}^n$ for all $k\in K$ and all $j\in J$. We first give the constraint qualification ensuring KKT optimality conditions for MIOP.
\begin{definition}\label{de4.1}(\cite{Dutta,Tuyen})
We say that the basic constraint qualification (for short, BCQ) holds at $\overline{u}\in S$ if there is no $\mu\in \mathbb{R}^p_+\backslash\{0\}$ with $\mu_j\geq0$ for all $j\in I(\overline{u})$ and $\mu_j=0$ for all $j\not\in I(\overline{u})$ such that
$$
0\in\sum_{j\in J}\mu_j\partial^0 g_j(\overline{u}),
$$
where $I(\overline{u}):=\{j\in J:g_j(\overline{u})=0\}$.
\end{definition}
\begin{remark}
(i) Clearly, if BCQ holds at $\overline{u}\in S$, then $I(\overline{u})$ is nonempty, i.e., $g_j(\overline{u})=0$ for some $j\in J$; (ii) We say that BCQ holds on a subset $S_0$ of $S$ if the condition BCQ holds at each point $u\in S_0$.
\end{remark}

To obtain some KKT necessary optimality conditions for approximate solutions of MIOP, we need the following lemma.
\begin{lemma}\label{le4.1}
Suppose that BCQ holds at $\overline{u}\in S$. Then $N_S(\overline{u})\subset\{\eta\xi:\eta\geq0,\xi\in co\{\partial^0g_j(\overline{u}):j\in I(\overline{u})\}\}.$
\end{lemma}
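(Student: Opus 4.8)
The plan is to collapse the $p$ inequalities into a single Lipschitz inequality by introducing the aggregate active-constraint function $G(u):=\max_{j\in J}g_j(u)$, so that $S=\{u\in\mathbb{R}^n:G(u)\leq 0\}$ and $G$ is locally Lipschitz continuous as a finite maximum of locally Lipschitz functions. By the remark following Definition \ref{de4.1}, BCQ forces $I(\overline{u})\neq\emptyset$, hence $G(\overline{u})=\max_{j}g_j(\overline{u})=0$, and the indices realizing the maximum defining $G$ at $\overline{u}$ are exactly those with $g_j(\overline{u})=0$, i.e.\ the set $I(\overline{u})$. Applying the max-formula (ii) for the generalized gradient recalled before Definition \ref{de4.1} to $G$ at $\overline{u}$ then gives
$$\partial^0 G(\overline{u})\subseteq \textrm{co}\{\partial^0 g_j(\overline{u}):j\in I(\overline{u})\}.$$

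Next I would translate BCQ into the single scalar qualification $0\notin\partial^0 G(\overline{u})$. Arguing by contradiction, if $0\in\partial^0 G(\overline{u})$, the inclusion above produces a representation $0=\sum_{j\in I(\overline{u})}\lambda_j\xi_j$ with $\lambda_j\geq 0$, $\sum_{j}\lambda_j=1$ and $\xi_j\in\partial^0 g_j(\overline{u})$. Setting $\mu_j:=\lambda_j$ for $j\in I(\overline{u})$ and $\mu_j:=0$ otherwise yields $\mu\in\mathbb{R}^p_+\backslash\{0\}$ (nontrivial because $\sum_j\lambda_j=1$) with $0\in\sum_{j\in J}\mu_j\partial^0 g_j(\overline{u})$, contradicting BCQ. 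Therefore $0\notin\partial^0 G(\overline{u})$.

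The decisive step, which I expect to be the main obstacle since it is the one ingredient not already packaged in the preliminaries, is the normal-cone estimate for a single Lipschitz inequality. Because $G$ is Lipschitz near $\overline{u}$, $G(\overline{u})=0$, and $0\notin\partial^0 G(\overline{u})$, Clarke's characterization of the tangent cone to a sublevel set gives $T_S(\overline{u})=\{v\in\mathbb{R}^n:G^0(\overline{u};v)\leq 0\}$. I would then pass to polars using the definition of $N_S(\overline{u})$ given in the excerpt: since $G^0(\overline{u};\cdot)$ is the support function of the compact convex set $\partial^0 G(\overline{u})$, the set $\{v:G^0(\overline{u};v)\leq 0\}$ is the polar cone of $\partial^0 G(\overline{u})$, and the bipolar theorem yields $N_S(\overline{u})\subseteq\{\eta\zeta:\eta\geq 0,\zeta\in\partial^0 G(\overline{u})\}$. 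Here I would note that the generated cone is already closed, because $\partial^0 G(\overline{u})$ is compact and $0\notin\partial^0 G(\overline{u})$, so no closure operation is needed. Either I cite this estimate directly from the Clarke references \cite{Clarke2,Clarke3,Clarke4}, or I reprove it by the support-function/polarity argument just sketched.

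Finally, chaining the two displayed inclusions gives
$$N_S(\overline{u})\subseteq\{\eta\zeta:\eta\geq 0,\zeta\in\partial^0 G(\overline{u})\}\subseteq\{\eta\xi:\eta\geq 0,\xi\in\textrm{co}\{\partial^0 g_j(\overline{u}):j\in I(\overline{u})\}\},$$
which is exactly the asserted estimate. The only routine verifications left are that the finite maximum $G$ inherits local Lipschitz continuity from the $g_j$ and that the index set active for $G$ at $\overline{u}$ coincides with $I(\overline{u})$, both of which are immediate.
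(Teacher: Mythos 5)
Your proposal is correct and follows essentially the same route as the paper: both aggregate the constraints into $g(u):=\max_{j\in J}g_j(u)$, use BCQ together with the max-rule to conclude $0\notin\partial^0 g(\overline{u})$, and then invoke Clarke's normal-cone estimate for the sublevel set $\{g\leq 0\}$ (which the paper cites as Theorem 10.42 of \cite{Clarke2} and you optionally reprove via the tangent-cone/polarity argument). The extra observations you include --- that the generated cone is already closed and that the active index set for $g$ at $\overline{u}$ is exactly $I(\overline{u})$ --- are correct refinements of the same argument.
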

\begin{proof}
Define the function $g$ as $g(u):=\max\{g_j(u):j\in J\}$. Then by $S=\{u\in \mathbb{R}^n:g_j(u)\leq0,\forall j\in J\}$, one has $S=\{u\in \mathbb{R}^n:g(u)\leq0\}$. We notice that $g(\overline{u})=0$ due to
$g_j(\overline{u})=0$ for some $j\in J$.
Since BCQ holds at $\overline{u}\in S$, we have $0\not\in\partial^0g(\overline{u})$. Indeed, if $0\in\partial^0g(\overline{u})$,
then $0\in co\{\partial^0g_j(\overline{u}):j\in I(\overline{u})\}$. This implies that there exists $j\in I(\overline{u})$, $\mu_j\geq0$ not all zero,  satisfying
$0\in\sum_{j\in I(\overline{u})}\mu_j\partial^0 g_j(\overline{u})$.
Now we set $\mu_j=0$ for $j\not\in I(\overline{u})$ and so $0\in\sum_{j\in J}\mu_j\partial^0 g_j(\overline{u})$,
which contradicts with the condition BCQ.
Thus, $0\not\in\partial^0g(\overline{u})$ and it follows from Theorem 10.42 in \cite{Clarke2} that
$N_S(\overline{u})\subset\{\eta\xi:\eta\geq0,\xi\in\partial^0g(\overline{u})\},$
which says that $N_S(\overline{u})\subset\{\eta\xi:\eta\geq0,\xi\in co\{\partial^0g_j(\overline{u}):j\in I(\overline{u})\}\}$.
\end{proof}

Now we first give the KKT necessary optimality condition of weak $\varepsilon$-minimal solutions for MIOP.
\begin{theorem}\label{th4.1}
Let $\varepsilon\in\mathbb{R}^m_+\backslash\{0\}$ and $\overline{u}\in M(F,S,\varepsilon)$. Given $\delta>0$, assume that BCQ holds on $\overline{B}(\overline{u},\delta)$. Then there exist $x_\delta\in \overline{B}(\overline{u},\delta)$ and $\lambda\in\mathbb{R}^m_+$, $\mu\in\mathbb{R}^p_+$ with $\sum_{k\in K}\lambda_k=1$, satisfying
\begin{equation}\label{eq4.1}
\begin{split}
0\in&\sum_{k\in K}\lambda_k\partial^0 F_k(x_\delta)+\sum_{j\in I(x_\delta)}\mu_j\partial^0 g_j(x_\delta)+\frac{1}{\delta}\max_{k\in K}\{\varepsilon_k\}\mathbb{B}^n,\\
&\mu_jg_j(x_\delta)=0, \;\forall j\in J.
\end{split}
\end{equation}
\end{theorem}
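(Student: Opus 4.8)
The plan is to reduce the vector interval-valued statement to the scalar Ekeland variational principle through a max-type scalarization, and then to read off the multipliers via Clarke's nonsmooth calculus together with Lemma \ref{le4.1}. First I would introduce the scalar function
\[
\theta(z):=\max_{k\in K}\max\Bigl\{F_k^c(z)-F_k^c(\overline{u})+\tfrac{\varepsilon_k}{2},\;F_k^w(z)-F_k^w(\overline{u})+\tfrac{\varepsilon_k}{2}\Bigr\},
\]
which is locally Lipschitz on $\mathbb{R}^n$ because each $F_k^c,F_k^w$ is (Remark \ref{re2.2}(d)) and a finite maximum preserves local Lipschitzness. Unwinding the definition of $\prec_{CW}$ and of $M(F,S,\varepsilon)$, the hypothesis $\overline{u}\in M(F,S,\varepsilon)$ says precisely that for every $z\in S$ some index $i$ satisfies $F_i^c(z)+\tfrac{\varepsilon_i}{2}\ge F_i^c(\overline{u})$ or $F_i^w(z)+\tfrac{\varepsilon_i}{2}\ge F_i^w(\overline{u})$; that is, $\theta(z)\ge0$ for all $z\in S$. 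Since moreover $\theta(\overline{u})=\tfrac12\max_{k\in K}\varepsilon_k$, writing $\eta:=\tfrac12\max_{k\in K}\varepsilon_k$ gives $\theta(\overline{u})\le\inf_{z\in S}\theta(z)+\eta$, so $\overline{u}$ is an $\eta$-approximate minimizer of $\theta$ over the closed set $S$.

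Next I would apply the scalar Ekeland variational principle to the lsc, bounded-below (by $0$) function $\theta$ on the complete metric space $(S,\|\cdot\|)$, with prescribed perturbation radius $\delta$. This yields $x_\delta\in S$ with $\theta(x_\delta)\le\theta(\overline{u})$, with $\|x_\delta-\overline{u}\|\le\delta$ (hence $x_\delta\in\overline{B}(\overline{u},\delta)$), and such that $x_\delta$ minimizes over $S$ the perturbed functional $\psi(z):=\theta(z)+\tfrac{\eta}{\delta}\|z-x_\delta\|$, which is again locally Lipschitz.

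Since $x_\delta$ minimizes the locally Lipschitz $\psi$ over the closed set $S$, Clarke's necessary optimality condition gives
\[
0\in\partial^0\psi(x_\delta)+N_S(x_\delta).
\]
I would then expand $\partial^0\psi(x_\delta)$: by the sum rule (i) and the fact that the generalized gradient of $\|\cdot-x_\delta\|$ at $x_\delta$ is $\mathbb{B}^n$, one has $\partial^0\psi(x_\delta)\subset\partial^0\theta(x_\delta)+\tfrac{\eta}{\delta}\mathbb{B}^n$. Applying the max rule (ii) to the $2m$ functions defining $\theta$ (the constant shifts do not affect the gradient) and using $\partial^0F_k=\mathrm{co}\{\partial^0F_k^c,\partial^0F_k^w\}$ gives $\partial^0\theta(x_\delta)\subset\mathrm{co}\{\partial^0F_k(x_\delta):k\in K\}$, so the corresponding term can be written as $\sum_{k\in K}\lambda_k\partial^0F_k(x_\delta)$ with $\lambda_k\ge0$ and $\sum_{k\in K}\lambda_k=1$. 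For the normal cone term, since BCQ holds at $x_\delta\in\overline{B}(\overline{u},\delta)$, Lemma \ref{le4.1} lets me write any element of $N_S(x_\delta)$ as $\sum_{j\in I(x_\delta)}\mu_j\partial^0g_j(x_\delta)$ with $\mu_j\ge0$; setting $\mu_j=0$ for $j\notin I(x_\delta)$ produces the complementary slackness $\mu_jg_j(x_\delta)=0$ for all $j\in J$. Finally, because $\tfrac{\eta}{\delta}\mathbb{B}^n=\tfrac{1}{2\delta}\max_{k\in K}\varepsilon_k\,\mathbb{B}^n\subseteq\tfrac{1}{\delta}\max_{k\in K}\varepsilon_k\,\mathbb{B}^n$, combining the three inclusions yields \eqref{eq4.1}.

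The main obstacle I expect is the subdifferential bookkeeping in the third step: one must verify that the maximum of the $2m$ scalar functions $F_k^c,F_k^w$ collapses, through the weakly generalized gradient $\partial^0F_k=\mathrm{co}\{\partial^0F_k^c,\partial^0F_k^w\}$, into a convex combination of the $\partial^0F_k$, which is exactly what forces the normalization $\sum_{k\in K}\lambda_k=1$. A secondary technical point is justifying the constrained Clarke condition $0\in\partial^0\psi(x_\delta)+N_S(x_\delta)$ and confirming that all functions remain locally Lipschitz at $x_\delta$, so that rules (i), (ii) and Lemma \ref{le4.1} are applicable there; the looser constant $\tfrac{1}{\delta}\max_{k\in K}\varepsilon_k$ in the statement is harmless since it only enlarges the error ball.
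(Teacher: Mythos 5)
Your proposal is correct and follows essentially the same route as the paper: the same max-type scalarization $\phi(u)=\max_k\{F_k^c(u)+\tfrac{\varepsilon_k}{2}-F_k^c(\overline{u}),\,F_k^w(u)+\tfrac{\varepsilon_k}{2}-F_k^w(\overline{u})\}$, the scalar Ekeland principle on $S$ with the bound $\phi(\overline{u})\le\inf_S\phi+\tfrac12\max_k\varepsilon_k$, and then the Clarke sum and max rules together with Lemma \ref{le4.1} to extract $\lambda$ and $\mu$ (the paper justifies the constrained stationarity condition via the exact penalty $\Phi+(L+1)d_S$ and $N_S=\mathrm{cl}(\mathbb{R}_+\partial^0 d_S)$, and your slightly different choice of Ekeland radius only yields a smaller error ball, which is harmless).
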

\begin{proof}
Since $\overline{u}\in M(F,S,\varepsilon)$, there is no $u\in S$ such that $F_k(u)+[0,\varepsilon_k]\prec_{CW} F_k(\overline{u})$ for all $k\in K$. This says that
for every $u\in S$, there is $k\in K$ such that $F^c_k(u)+\frac{1}{2}\varepsilon_k-F^c_k(\overline{u})\geq0$ or $F^w_k(u)+\frac{1}{2}\varepsilon_k-F^w_k(\overline{u})\geq0$.
For any $u\in \mathbb{R}^n$, we define the function $\phi:\mathbb{R}^n\to\mathbb{R}$ by
$$\phi(u):=\max\{F^c_k(u)+\frac{1}{2}\varepsilon_k-F^c_k(\overline{u}),F^w_k(u)+\frac{1}{2}\varepsilon_k-F^w_k(\overline{u}):k\in K\}.$$
Then $\phi$ is locally Lipschitz continuous and lower bounded on $S$ and $\phi(u)\geq0$ for all $u\in S$. Thanks to $\phi(\overline{u})=\frac{1}{2}\max\{\varepsilon_k:k\in K\}$, one has
$$\phi(\overline{u})\leq\inf_{u\in S}\phi(u)+\frac{1}{2}\max_{k\in K}\{\varepsilon_k\}.$$
Then, applying EVP (\cite{Ekeland1}) on $S$, for every $\delta>0$, we can find $x_\delta\in S$ such that
 $\|x_\delta-\overline{u}\|\leq\frac{1}{2}\delta$ and
 $\phi(x_\delta)\leq\phi(u)+\frac{1}{\delta}\max_{k\in K}\{\varepsilon_k\}\|u-x_\delta\|$ for all $u\in S$.
Thus, we can define the real-valued function $\Phi$ as follows
$$\Phi(u):=\phi(u)+\frac{1}{\delta}\max_{k\in K}\{\varepsilon_k\}\|u-x_\delta\|,\;\forall u\in\mathbb{R}^n.$$
Clearly, $x_\delta$ is a minimizer of $\Phi(\cdot)+(L+1)d_S(\cdot)$, where $L$ is a Lipschitz constant for $\Phi$.
Thus, $0\in\partial^0\Phi(x_\delta)+(L+1)\partial^0d_S(x_\delta)$. By Proposition 11.2.4 in \cite{Schi}, we know that $N_S(x_\delta)=cl(\mathbb{R}_+\partial^0d_S(x_\delta))$. Hence the sum rule gives
$0\in \partial^0\phi(x_\delta)+N_S(x_\delta)+\frac{1}{\delta}\max_{k\in K}\{\varepsilon_k\}\mathbb{B}^n$.
Since
$\partial^0\phi(x_\delta)\subset \textrm{co}\{\partial^0 F^c_r(x_\delta),\partial^0 F^w_s(x_\delta):
F^c_r(x_\delta)+\frac{1}{2}\varepsilon_r-F^w_r(\overline{u})=\phi(x_\delta),
F^w_s(x_\delta)+\frac{1}{2}\varepsilon_s-F^w_s(\overline{u})=\phi(x_\delta), r,s\in K\}$, it follows from the notion of $\partial^0 F_k(x_\delta)$ that $\partial^0\phi(x_\delta)\subset \textrm{co}\{\partial^0 F_k(x_\delta): k\in K\}$.
Then by Lemma \ref{le4.1} that there exist $\lambda\in\mathbb{R}^m_+$ with $\sum_{k\in K}\lambda_k=1$, $\mu_j\geq0$ not all zero for $j\in I(x_\delta)$ and $\mu_j=0$ for $j\not\in I(x_\delta)$, satisfying
$$0\in\sum_{k\in K}\lambda_k\partial^0 F_k(x_\delta)+\sum_{j\in I(x_\delta)}\mu_j\partial^0 g_j(x_\delta)+\frac{1}{\delta}\max_{k\in K}\{\varepsilon_k\}\mathbb{B}^n,$$
$$\mu_jg_j(x_\delta)=0, \;\forall j\in J.$$
\end{proof}

We now derive the KKT  necessary optimality conditions of weak minimal solutions for MIOP.
\begin{theorem}\label{th4.2}
Let $\overline{u}\in M(F,S)$. Assume that BCQ holds at $\overline{u}\in S$. Then there exists $\lambda\in\mathbb{R}^m_+$, $\mu\in\mathbb{R}^p_+$, $\sum_{k\in K}\lambda_k=1$, satisfying
\begin{equation}\label{eq4.2}
\begin{split}
0\in&\sum_{k\in K}\lambda_k\partial^0 F_k(\overline{u})+\sum_{j\in I(\overline{u})}\mu_j\partial^0 g_j(\overline{u}),\\
&\mu_jg_j(\overline{u})=0, \;\forall j\in J.
\end{split}
\end{equation}
\end{theorem}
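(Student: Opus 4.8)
The plan is to reproduce the scheme of the proof of Theorem \ref{th4.1}, observing that since $\overline{u}$ is an \emph{exact} weak minimal solution rather than an approximate one, no appeal to Ekeland's variational principle is required: $\overline{u}$ itself plays the role of the point $x_\delta$, and the perturbation ball $\frac{1}{\delta}\max_{k}\{\varepsilon_k\}\mathbb{B}^n$ disappears. First I would scalarize. Because $\overline{u}\in M(F,S)$, there is no $u\in S$ with $F_k(u)\prec_{CW}F_k(\overline{u})$ for all $k\in K$; equivalently, for each $u\in S$ there is some $k\in K$ with $F^c_k(u)-F^c_k(\overline{u})\geq 0$ or $F^w_k(u)-F^w_k(\overline{u})\geq 0$. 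Hence, defining
$$\phi(u):=\max\{F^c_k(u)-F^c_k(\overline{u}),\,F^w_k(u)-F^w_k(\overline{u}):k\in K\},$$
we have $\phi(u)\geq 0$ for every $u\in S$ while $\phi(\overline{u})=0$, so $\overline{u}$ minimizes $\phi$ over $S$. By Remark \ref{re2.2}(d) each $F^c_k,F^w_k$ is locally Lipschitz, hence so is $\phi$.

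Next I would remove the constraint by exact penalization. Letting $L$ be a Lipschitz constant of $\phi$ near $\overline{u}$, the point $\overline{u}$ minimizes $\phi(\cdot)+(L+1)d_S(\cdot)$, which gives $0\in\partial^0\phi(\overline{u})+(L+1)\partial^0 d_S(\overline{u})$. Using the identity $N_S(\overline{u})=\mathrm{cl}(\mathbb{R}_+\partial^0 d_S(\overline{u}))$ (Proposition 11.2.4 in \cite{Schi}) together with the sum rule, this yields $0\in\partial^0\phi(\overline{u})+N_S(\overline{u})$.

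Then I would estimate $\partial^0\phi(\overline{u})$. Since all the functions inside the max attain the maximal value $0$ at $\overline{u}$, the max rule (property (ii) of the generalized gradient recalled in Section \ref{sec2}) gives
$$\partial^0\phi(\overline{u})\subset\textrm{co}\{\partial^0 F^c_k(\overline{u}),\partial^0 F^w_k(\overline{u}):k\in K\}=\textrm{co}\{\partial^0 F_k(\overline{u}):k\in K\},$$
the last equality following from the definition $\partial^0 F_k(\overline{u})=\textrm{co}\{\partial^0 F^c_k(\overline{u}),\partial^0 F^w_k(\overline{u})\}$. Consequently there are $\lambda_k\geq 0$ with $\sum_{k\in K}\lambda_k=1$ such that $0\in\sum_{k\in K}\lambda_k\partial^0 F_k(\overline{u})+N_S(\overline{u})$. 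Finally I would absorb the normal cone via Lemma \ref{le4.1}: BCQ at $\overline{u}$ gives $N_S(\overline{u})\subset\{\eta\xi:\eta\geq 0,\ \xi\in\textrm{co}\{\partial^0 g_j(\overline{u}):j\in I(\overline{u})\}\}$, so the normal-cone term can be written as $\sum_{j\in I(\overline{u})}\mu_j\partial^0 g_j(\overline{u})$ with suitable $\mu_j\geq 0$; setting $\mu_j=0$ for $j\notin I(\overline{u})$ produces both the inclusion \eqref{eq4.2} and the complementarity $\mu_j g_j(\overline{u})=0$ for all $j\in J$.

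The only delicate point is the subdifferential calculus for $\phi$: one must confirm that, because every term of the max is active at $\overline{u}$ (all equal to $0$), the upper estimate of $\partial^0\phi(\overline{u})$ ranges over all $k\in K$ and over both $F^c_k$ and $F^w_k$, which is exactly what guarantees a full weight vector $\lambda$ with $\sum_{k\in K}\lambda_k=1$. Once this is settled, the exact-penalization step, the normal-cone identity, and the invocation of Lemma \ref{le4.1} are routine, mirroring the already-established Theorem \ref{th4.1}.
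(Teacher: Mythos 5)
Your proposal is correct and follows essentially the same route as the paper's proof: scalarization via the max function $\phi$, exact penalization with $(L+1)d_S(\cdot)$, the max rule for $\partial^0\phi(\overline{u})$ (with all $2m$ terms active since each equals $0$ at $\overline{u}$), and Lemma \ref{le4.1} to absorb $N_S(\overline{u})$ into the $\mu_j\partial^0 g_j(\overline{u})$ terms. Your explicit remark that every term of the max is active at $\overline{u}$ actually states the key point more cleanly than the paper's own (somewhat garbled) display of the active-index set.
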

\begin{proof}
Since $\overline{u}\in M(F,S)$, there is no $u\in S$ satisfying $F_k(u)\prec_{CW} F_k(\overline{u})$ for all $k\in K$. Thus,
for each $u\in S$, there is $k\in K$ such that $F^c_k(u)-F^c_k(\overline{u})\geq0$ or $F^w_k(u)-F^w_k(\overline{u})\geq0$.
For any $u\in \mathbb{R}^n$, we define the function $\phi:\mathbb{R}^n\to\mathbb{R}$ by
$$\phi(u):=\max\{F^c_k(u)-F^c_k(\overline{u}),F^w_k(u)-F^w_k(\overline{u}):k\in K\}.$$
Note that $\phi$ is locally Lipschitz continuous on $\mathbb{R}^n$ and $\phi(u)\geq0$ for all $u\in S$. By $\phi(\overline{u})=0$, one has $\phi(u)\geq\phi(\overline{u})$ for all $u\in S$.
Hence $\overline{u}$ is a minimizer of $\phi(\cdot)+(L+1)d_S(\cdot)$, where $L$ is a Lipschitz constant for $\phi$.
Hence $0\in\partial^0\phi(\overline{u})+(L+1)\partial^0d_S(\overline{u})$. Employing Proposition 11.2.4 in \cite{Schi}, we know that $N_S(\overline{u})=cl(\mathbb{R}_+\partial^0d_S(\overline{u}))$. This implies that
$0\in co\{\partial^0F^c_r(\overline{u}),\partial^0 F^w_s(\overline{u}):F^c_r(u)=F^c_r(\overline{u}),F^w_s(u)=F^w_s(\overline{u}),r,s\in K\}+N_S(\overline{u})$.
Then it follows from the notion of $\partial^0 F_k(\overline{u})$ and Lemma \ref{le4.1} that there exist $\lambda\in\mathbb{R}^m_+$ with $\sum_{k\in K}\lambda_k=1$, $\mu_j\geq0$ not all zero for $j\in I(\overline{u})$ and $\mu_j=0$ for $j\not\in I(\overline{u})$, satisfying
$$0\in\sum_{k\in K}\lambda_k\partial^0 F_k(\overline{u})+\sum_{j\in I(\overline{u})}\mu_j\partial^0 g_j(\overline{u}),$$
$$\mu_jg_j(\overline{u})=0, \;\forall j\in J.$$
\end{proof}
\begin{remark}\label{re4.2}
\begin{itemize}
\item[(i)] A point $\overline{u}\in S$ is called a KKT point if $\overline{u}$ satisfies (\ref{eq4.2});

\item[(ii)] If $\overline{u}$ is a weak minimizer of $F$ on $\mathbb{R}^n$, then $0\in\sum_{k\in K}\lambda_k\partial^0 F_k(\overline{u})$
for some $\lambda\in\mathbb{R}^m_+$ and $\sum_{k\in K}\lambda_k=1$.
\end{itemize}
\end{remark}

We present an example to illustrate Theorem \ref{th4.2}.
\begin{example}
Consider the following multiobjective interval-valued optimization problem:
$$
\textrm{Min}\; F(u):=(F_1(u),F_2(u))\quad \textrm{s.t.} \;\; u\in S,
$$
where $S:=\{u\in \mathbb{R}:g_1(u)=-u\leq0, g_2(u)=-u-1\leq0\}$, $F_1(u):=[|u|,|u|+1]$ and $F_2(u):=[2|u|,2|u|+2]$ for all $u\in\mathbb{R}$,
Let $\overline{u}=0$. Clearly, $\overline{u}=0$ is a weak minimal solution for MIOP with $I(\overline{u})=\{j:g_j(\overline{u})=0,j\in\{1,2\}\}=\{1\}$. We also can check that $\partial^0 F_1(0)=[-1,1]$,
$\partial^0 F_2(0)=[-2,2]$, and $\partial^0 g_j(0)=\{-1\}$ for $j=1,2$ . Moreover, we can see that BCQ holds at $\overline{u}$.
Now setting $\lambda_1=\lambda_2=\frac{1}{2}$ and $\mu_1=\frac{1}{2}$, $\mu_2=0$, we can easily check that $\overline{u}$ satisfies (\ref{eq4.2}).

\end{example}

Next we give the KKT necessary optimality condition of weak $\varepsilon$-quasi-minimal solutions for MIOP.
\begin{corollary}\label{co4.1}
Let $\varepsilon\in\mathbb{R}^m_+\backslash\{0\}$ and  $\overline{u}\in QM(F,S,\varepsilon)$. Assume that BCQ holds at $\overline{u}\in S$. Then there exists $\lambda\in\mathbb{R}^m_+$, $\mu\in\mathbb{R}^p_+$, $\sum_{k\in K}\lambda_k=1$, such that
\begin{equation}\label{eq4.3}
\begin{split}
0\in&\sum_{k\in K}\lambda_k\partial^0 F_k(\overline{u})+\sum_{j\in I(\overline{u})}\mu_j\partial^0 g_j(\overline{u})+\sum_{k\in K}\lambda_k\varepsilon_k \mathbb{B}^n,\\
&\mu_jg_j(\overline{u})=0, \;\forall j\in J.
\end{split}
\end{equation}
\end{corollary}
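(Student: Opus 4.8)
The plan is to reduce Corollary \ref{co4.1} to the already-established Theorem \ref{th4.2} by absorbing the quasi-minimality perturbation into a modified objective. First I would introduce, for each $k\in K$, the perturbed interval-valued function
$$\widetilde{F}_k(u):=F_k(u)+[0,\varepsilon_k\|u-\overline{u}\|],\qquad u\in\mathbb{R}^n,$$
and observe that $\widetilde{F}_k(\overline{u})=F_k(\overline{u})$, since the added interval collapses to $[0,0]$ at $u=\overline{u}$. The key reduction is that the hypothesis $\overline{u}\in QM(F,S,\varepsilon)$ is exactly the statement that $\overline{u}$ is a weak minimal solution of $\widetilde{F}$ on $S$: the nonexistence of $z\in S$ with $F_k(z)+[0,\varepsilon_k\|z-\overline{u}\|]\prec_{CW}F_k(\overline{u})$ for all $k\in K$ reads, after substituting $\widetilde{F}_k(\overline{u})=F_k(\overline{u})$, precisely as the nonexistence of $z\in S$ with $\widetilde{F}_k(z)\prec_{CW}\widetilde{F}_k(\overline{u})$ for all $k\in K$, that is, $\overline{u}\in M(\widetilde{F},S)$.

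Next I would verify that $\widetilde{F}$ fits the standing assumptions of Theorem \ref{th4.2}. The center and width of the perturbation $[0,\varepsilon_k\|\cdot-\overline{u}\|]$ both equal $\tfrac{1}{2}\varepsilon_k\|\cdot-\overline{u}\|$, which is globally Lipschitz; hence each $\widetilde{F}_k$ is locally Lipschitz continuous as a sum of locally Lipschitz interval-valued functions (Remark \ref{re2.2}(d)). Since the constraints $g_j$ are untouched, the feasible set $S$, the active index set $I(\overline{u})$, and the validity of BCQ at $\overline{u}$ are all unchanged. Applying Theorem \ref{th4.2} to $\widetilde{F}$ therefore yields $\lambda\in\mathbb{R}^m_+$ with $\sum_{k\in K}\lambda_k=1$ and $\mu\in\mathbb{R}^p_+$ with $\mu_j g_j(\overline{u})=0$ for all $j\in J$ such that
$$0\in\sum_{k\in K}\lambda_k\partial^0\widetilde{F}_k(\overline{u})+\sum_{j\in I(\overline{u})}\mu_j\partial^0 g_j(\overline{u}).$$

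Finally I would convert this inclusion back to the original objective $F$ using Remark \ref{re2.3}, which gives $\partial^0\widetilde{F}_k(\overline{u})=\partial^0\bigl(F_k(\cdot)+[0,\varepsilon_k\|\cdot-\overline{u}\|]\bigr)(\overline{u})\subset\partial^0 F_k(\overline{u})+\varepsilon_k\mathbb{B}^n$. Multiplying by $\lambda_k\geq0$ and summing over $k\in K$ turns the right-hand side into $\sum_{k\in K}\lambda_k\partial^0 F_k(\overline{u})+\sum_{k\in K}\lambda_k\varepsilon_k\mathbb{B}^n$, which delivers exactly the inclusion \eqref{eq4.3} together with the complementarity condition $\mu_j g_j(\overline{u})=0$. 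The main obstacle, and really the only nontrivial point, is the identification in the first step that quasi-minimality of $F$ coincides with ordinary weak minimality of the perturbed family $\widetilde{F}$; once this is in place the result is a mechanical consequence of Theorem \ref{th4.2} and the subdifferential estimate in Remark \ref{re2.3}.
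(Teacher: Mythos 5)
Your proposal is correct and follows essentially the same route as the paper: the paper likewise recasts $\overline{u}\in QM(F,S,\varepsilon)$ as weak minimality of the perturbed objective $\overline{F}_k(u)=F_k(u)+[0,\varepsilon_k\|u-\overline{u}\|]$, applies Theorem \ref{th4.2}, and invokes Remark \ref{re2.3} for the inclusion $\partial^0(F_k(\cdot)+[0,\varepsilon_k\|\cdot-\overline{u}\|])\subset\partial^0F_k(\cdot)+\varepsilon_k\mathbb{B}^n$. Your write-up is in fact slightly more careful than the paper's, since you explicitly verify the local Lipschitz continuity of the perturbed objective and the identity $\widetilde{F}_k(\overline{u})=F_k(\overline{u})$.
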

\begin{proof}
Since $\overline{u}\in QM(F,S,\varepsilon)$, we can see that $\overline{u}$ is a weak minimal solution of the following problem:
$$
\quad \textrm{Min}\; \overline{F}(u):=(F_1(u)+[0,\varepsilon_1\|u-\overline{u}\|],\cdots,
F_m(u)+[0,\varepsilon_m\|u-\overline{u}\|])\quad \textrm{s.t.} \;\; u\in S,
$$
By Remark \ref{re2.3}, one has $\partial^0(F_k(\cdot)+[0,\varepsilon_k\|\cdot-\overline{u}\|])\subset\partial^0F_k(\cdot)+
\varepsilon_k\mathbb{B}^n$. Thus, by Theorem \ref{th4.2}, there is $\lambda\in\mathbb{R}^m_+$, $\mu\in\mathbb{R}^p_+$, $\sum_{k\in K}\lambda_k=1$, such that
$$0\in\sum_{k\in K}\lambda_k\partial^0 F_k(\overline{u})+\sum_{j\in I(\overline{u})}\mu_j\partial^0 g_j(\overline{u})+\sum_{k\in K}\lambda_k\varepsilon_k \mathbb{B}^n,$$
$$\mu_jg_j(\overline{u})=0, \;\forall j\in J.$$
\end{proof}

The following concept is inspired from \cite{CK,Son}.
\begin{definition}\label{de4.2}
Let $X_0$ be a nonempty of $\mathbb{R}^n$. $(F,g_J)$ is called generalized convex at $u_0\in X_0$ if for any $u\in X_0$, $u^*_k\in\partial^0F_k(u_0)$, $k\in K$ and $z^*_j\in \partial^0 g_j(u_0)$, $j\in J$, there exists $v\in \mathbb{R}^n$ such that
\begin{equation*}
\begin{split}
&\langle u^*_k,v\rangle\leq F^c_k(u)-F^c_k(u_0)+F_k^w(u)-F_k^w(u_0),\; \forall k\in K,\\
&\langle z^*_j,v\rangle\leq g_j(u)-g_j(u_0),\forall j\in J,\\
&\langle y^*,v\rangle\leq\|u-u_0\|,\forall y^*\in\mathbb{B}^n.
\end{split}
\end{equation*}
\end{definition}

We will give an example to illustrate Definition \ref{de4.2}.
\begin{example}
 Consider the following multiobjective interval-valued function defined as:
$$
 F(u):=(F_1(u),F_2(u)),
$$
where $F_1(u):=[|u|,|u|+1]$, $F_2(u):=[2|u|,2|u|+1]$, and $g(u):=-u$ for all $u\in \mathbb{R}$.
Let $X_0=\mathbb{R}$ and $u_0=0$. Clearly, $F^c_1(u)=|u|+\frac{1}{2}$, $F^w_1(u)=1$, $F^c_2(u)=2|u|+\frac{1}{2}$, $F^w_2(u)=1$, $\partial^0 F_1(u_0)=[-1,1]$,
$\partial^0 F_2(u_0)=[-2,2]$, and $\partial^0 g(u_0)=\{-1\}$.
For any $u\in X_0$, $u^*_k\in\partial^0F_k(u_0)$, $k\in \{1,2\}$ and $z^*\in \partial^0 g(u_0)$, choosing $v:=g(u_0)-g(u)$, we can check that
$$ u^*_k\cdot v\leq F^c_k(u)-F^c_k(u_0)+F^w_k(u)-F^w_k(u_0),\;  z^*\cdot v\leq g(u)-g(u_0).$$
We also have $y^*\cdot v\leq|u-u_0|$ for all $y^*\in [-1,1]$. Thus, $(F,g)$ is generalized convex at $u_0\in X_0$.

\end{example}

Now we give the KKT sufficient optimality condition of weak $\varepsilon$-quasi-minimal solutions for MIOP involving generalized convexity condition.
\begin{theorem}\label{th4.3}
Let $\varepsilon\in\mathbb{R}^m_+\backslash\{0\}$ and $\overline{u}\in S$ satisfy (\ref{eq4.3}).
Assume that $(F,g_J)$ is generalized convex at $\overline{u}$. Then $\overline{u}\in QM(F,S,\varepsilon)$.
\end{theorem}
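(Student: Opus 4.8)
The plan is to argue by contradiction, using that a failure of quasi-minimality produces a feasible competitor whose penalized center-plus-width value strictly beats that of $\overline{u}$ in every coordinate, and then to contradict this by testing the stationarity inclusion (\ref{eq4.3}) against the direction furnished by the generalized convexity hypothesis. Suppose $\overline{u}\notin QM(F,S,\varepsilon)$. Then there is $z\in S$ with $F_k(z)+[0,\varepsilon_k\|z-\overline{u}\|]\prec_{CW}F_k(\overline{u})$ for all $k\in K$. Reading off the two defining inequalities of $\prec_{CW}$ (on centers and on widths) and adding them, I obtain the coordinatewise strict inequality
$$F^c_k(z)+F^w_k(z)+\varepsilon_k\|z-\overline{u}\|<F^c_k(\overline{u})+F^w_k(\overline{u}),\qquad\forall k\in K. \qquad (\ast)$$

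Next I would make (\ref{eq4.3}) explicit. Since $0$ belongs to the stated Minkowski sum, there exist subgradients $u^*_k\in\partial^0F_k(\overline{u})$ for $k\in K$ and $z^*_j\in\partial^0g_j(\overline{u})$ for $j\in I(\overline{u})$, together with a ball element $y^*\in\mathbb{B}^n$, such that
$$\sum_{k\in K}\lambda_k u^*_k+\sum_{j\in I(\overline{u})}\mu_j z^*_j+\Big(\sum_{k\in K}\lambda_k\varepsilon_k\Big)y^*=0.$$
Using the complementarity $\mu_jg_j(\overline{u})=0$ and $g_j(\overline{u})<0$ for $j\notin I(\overline{u})$, I set $\mu_j=0$ and choose any $z^*_j\in\partial^0 g_j(\overline{u})$ for those indices, so the constraint sum may be taken over all of $J$ without altering the identity.

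I then invoke generalized convexity of $(F,g_J)$ at $\overline{u}$, applied with $u:=z$ and with exactly the subgradients $u^*_k,z^*_j$ above, to produce a single direction $v\in\mathbb{R}^n$ satisfying $\langle u^*_k,v\rangle\le F^c_k(z)-F^c_k(\overline{u})+F^w_k(z)-F^w_k(\overline{u})$ for all $k$, $\langle z^*_j,v\rangle\le g_j(z)-g_j(\overline{u})$ for all $j$, and $\langle y^*,v\rangle\le\|z-\overline{u}\|$. Taking the inner product of the stationarity identity with $v$ gives
$$0=\sum_{k\in K}\lambda_k\langle u^*_k,v\rangle+\sum_{j\in J}\mu_j\langle z^*_j,v\rangle+\Big(\sum_{k\in K}\lambda_k\varepsilon_k\Big)\langle y^*,v\rangle.$$
Estimating term by term, the first block is bounded by $\sum_{k\in K}\lambda_k\big(F^c_k(z)+F^w_k(z)-F^c_k(\overline{u})-F^w_k(\overline{u})\big)$; the constraint block is $\le0$ since $\mu_j\ge0$, $g_j(\overline{u})=0$ on $I(\overline{u})$ and $g_j(z)\le0$; and the ball block is $\le\big(\sum_{k\in K}\lambda_k\varepsilon_k\big)\|z-\overline{u}\|$. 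Hence
$$0\le\sum_{k\in K}\lambda_k\big(F^c_k(z)+F^w_k(z)+\varepsilon_k\|z-\overline{u}\|-F^c_k(\overline{u})-F^w_k(\overline{u})\big).$$

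Finally I would compare with $(\ast)$: every summand on the right is strictly negative, and since $\lambda_k\ge0$ with $\sum_{k\in K}\lambda_k=1$ at least one $\lambda_k$ is positive, forcing the whole sum to be strictly negative, in contradiction with the displayed inequality $0\le(\cdots)$. Therefore no such $z$ exists and $\overline{u}\in QM(F,S,\varepsilon)$. I expect the only genuinely delicate step to be the handling of the ball term: one must use the third clause of Definition \ref{de4.2}, valid for every $y^*\in\mathbb{B}^n$ and hence for the particular $y^*$ extracted from (\ref{eq4.3}), to turn $\langle y^*,v\rangle$ into the penalty $\|z-\overline{u}\|$ with the correct nonnegative coefficient $\sum_{k\in K}\lambda_k\varepsilon_k$; the remainder is routine bookkeeping of the $\prec_{CW}$ inequalities and complementary slackness.
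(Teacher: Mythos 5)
Your argument is correct and follows essentially the same route as the paper's own proof: assume $\overline{u}\notin QM(F,S,\varepsilon)$, extract the strict center-plus-width inequality from $\prec_{CW}$, test the stationarity identity from (\ref{eq4.3}) against the direction $v$ supplied by generalized convexity, discard the constraint block via complementary slackness and feasibility, bound the ball term by $\|z-\overline{u}\|$, and reach a contradiction. The only cosmetic difference is that you derive the coordinatewise inequality $(\ast)$ first and aggregate with the weights $\lambda_k$ at the end, whereas the paper aggregates via Remark \ref{re2.1} before splitting into center and width components; the content is identical.
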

\begin{proof}
Since $\overline{u}\in S$ satisfies (\ref{eq4.3}), there exist $\lambda_k\geq0$, $x^*_k\in\partial^0 F_k(\overline{u})$, $k\in K$ with $\sum_{k\in K}\lambda_k=1$, and $\mu_j\geq0$, $z^*_j\in \partial^0 g_j(\overline{u})$, $j\in J$ with $\mu_jg_j(\overline{u})=0$, and $y^*\in \mathbb{B}^n$, satisfying
\begin{equation}\label{eq4.4}
0=\sum_{k\in K}\lambda_k x^*_k+\sum_{j\in I(\overline{u})}\mu_jz^*_j+\sum_{k\in K}\lambda_k\varepsilon_k y^*.
\end{equation}
Now suppose that $\overline{u}\not\in QM(F,S,\varepsilon)$. Then there is $u'\in S$ such that
$$
F_k(u')+[0,\varepsilon_k\|u'-\overline{u}\|]\prec_{CW}F_k(\overline{u}),\;\forall k\in K.
$$
By Remark \ref{re2.1}, we get
$\sum_{k\in K}\lambda_kF_k(u')+[0,\sum_{k\in K}\lambda_k\varepsilon_k\|u'-\overline{u}\|]\prec_{CW}\sum_{k\in K}\lambda_kF_k(\overline{u})$.
which means that
$$\sum_{k\in K}\lambda_kF^c_k(u')+\frac{1}{2}\sum_{k\in K}\lambda_k\varepsilon_k\|u'-\overline{u}\|<\sum_{k\in K}\lambda_kF^c_k(\overline{u}),$$
$$
\sum_{k\in K}\lambda_kF^w_k(u')+\frac{1}{2}\sum_{k\in K}\lambda_k\varepsilon_k\|u'-\overline{u}\|<\sum_{k\in K}\lambda_kF^w_k(\overline{u}).
$$
Then
\begin{equation}\label{eq4.5}
\sum_{k\in K}\lambda_k(F^c_k(u')+F^w_k(u'))+\sum_{k\in K}\lambda_k\varepsilon_k\|u'-\overline{u}\|<\sum_{k\in K}\lambda_k(F^c_k(\overline{u})
+F^w_k(\overline{u})).
\end{equation}
Since $(F,g_J)$ is generalized convex at $\overline{u}$, by (\ref{eq4.4}), there exists $v\in \mathbb{R}^n$ such that
\begin{equation*}
\begin{split}
0&=\sum_{k\in K}\lambda_k\langle x^*_k,v\rangle+\sum_{j\in I(\overline{u})}\mu_j\langle z^*_j,v\rangle+\sum_{k\in K}\lambda_k\varepsilon_k \langle y^*,v\rangle\\
&\leq\sum_{k\in K}\lambda_k( F^c_k(u')-F^c_k(\overline{u})+ F^w_k(u')-F^w_k(\overline{u}))+\sum_{k\in K}\lambda_k\varepsilon_k\|u'-\overline{u}\|.
\end{split}
\end{equation*}
Hence we can see that
$$
\sum_{k\in K}\lambda_k(F^c_k(\overline{u})
+F^w_k(\overline{u}))\leq\sum_{k\in K}\lambda_k(F^c_k(u')+F^w_k(u'))+\sum_{k\in K}\lambda_k\varepsilon_k\|u'-\overline{u}\|,
$$
which contradicts with (\ref{eq4.5}).
\end{proof}

By the notion of modified $\epsilon$-KKT point of MOPs in \cite{KSDD}, we also introduce a modified $\epsilon$-KKT point of MIOP.
\begin{definition}\label{de4.3}
For any given $\epsilon\geq0$, we say that $x_0\in S$ is a modified $\epsilon$-KKT point of MIOP if there is $x_\epsilon$ such that
$\|x_\epsilon-x_0\|\leq\sqrt{\epsilon}$ and there exist $u_k\in\partial^0F_k(x_\epsilon)$ for all $k\in K$, $v_j\in\partial^0g_j(x_\epsilon)$ for all $j\in J$, $\lambda\in\mathbb{R}^m_+$ with $\sum_{k\in K}\lambda_k=1$ and $\mu\in\mathbb{R}^p_+$ such that
$$
\|\sum_{k\in K}\lambda_k u_k+\sum_{j\in J}\mu_jv_j\|\leq\sqrt{\epsilon},
$$
$$
\sum_{j\in J}\mu_jg_j(x_0)\geq-\epsilon.
$$
\end{definition}
\begin{remark}\label{re4.3}
Clearly, (i) if $\epsilon=0$, then the modified $\epsilon$-KKT point of MIOP reduces to the KKT point of MIOP;
(ii) if $F_k$ is a real-valued function for all $k\in K$, then Definition \ref{de4.3} coincides with the notion of a modified $\epsilon$-KKT point in \cite{KSDD}.
\end{remark}

\begin{lemma}\label{le4.2}(\cite{Clarke2,Clarke3,Clarke4})
Let $S_1$ and $S_2$ be two closed subsets of $\mathbb{R}^n$ and $u\in S_1\bigcap S_2$. Assume that $-N_{S_1}(u)\bigcap N_{S_2}(u)=\{0\}$. Then $N_{S_1\cap S_2}(u)\subset N_{S_1}(u)+ N_{S_2}(u)$.
\end{lemma}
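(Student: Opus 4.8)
The plan is to transport the statement to the level of the distance functions $d_{S_1}$, $d_{S_2}$ and $d_{S_1\cap S_2}$, and to extract from the qualification $-N_{S_1}(u)\cap N_{S_2}(u)=\{0\}$ a local metric inequality. The starting point is the representation of the Clarke normal cone of a closed set $A$ through its distance function, $N_A(u)=\textrm{cl}(\mathbb{R}_+\partial^0 d_A(u))$, which is the form of Proposition 11.2.4 in \cite{Schi} already used in the proofs of Theorems \ref{th4.1} and \ref{th4.2}, together with the elementary fact that each $d_A$ is $1$-Lipschitz, so that $\partial^0 d_A(u)\subseteq\mathbb{B}^n$. Hence it suffices to compare $\partial^0 d_{S_1\cap S_2}(u)$ with $\partial^0 d_{S_1}(u)$ and $\partial^0 d_{S_2}(u)$.

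The analytic core is the metric inequality: under the hypothesis there are $\gamma>0$ and $\delta>0$ with $d_{S_1\cap S_2}(x)\leq\gamma(d_{S_1}(x)+d_{S_2}(x))$ for all $x\in\overline{B}(u,\delta)$. I would prove it by contradiction: were it to fail, there would be $x_n\to u$ with $d_{S_1\cap S_2}(x_n)>n(d_{S_1}(x_n)+d_{S_2}(x_n))$. Applying Ekeland's variational principle \cite{Ekeland1} to the Lipschitz function $d_{S_1}+d_{S_2}$ near each $x_n$ produces almost-minimisers $y_n\to u$ at which the generalized gradient of $d_{S_1}+d_{S_2}$ is small; that is, there are $\xi_1^n\in\partial^0 d_{S_1}(y_n)$ and $\xi_2^n\in\partial^0 d_{S_2}(y_n)$ with $\xi_1^n+\xi_2^n\to0$ while $\|\xi_1^n\|$ stays bounded away from $0$. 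Since $\partial^0 d_{S_i}(y_n)\subseteq\mathbb{B}^n$, finite-dimensionality lets me pass to a subsequence with $\xi_1^n\to\xi$ and $\xi_2^n\to-\xi$, where $\xi\neq0$, $\xi\in N_{S_1}(u)$ and $-\xi\in N_{S_2}(u)$; thus $-\xi$ is a nonzero element of $-N_{S_1}(u)\cap N_{S_2}(u)$, contradicting the hypothesis. I expect this construction — arranging the almost-minimisers so that the two gradients remain of comparable, nonvanishing size and converge to opposite cones — to be the main obstacle.

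Once the metric inequality is available, the inclusion follows by exact penalization. Let $\zeta$ be a proximal normal to $S_1\cap S_2$ at $u$, so that $\langle\zeta,x-u\rangle\leq\sigma\|x-u\|^2$ for some $\sigma\geq0$ and all $x\in S_1\cap S_2$ near $u$. Replacing the membership $x\in S_1\cap S_2$ by the penalty $K(d_{S_1}(x)+d_{S_2}(x))$ with $K=\gamma\|\zeta\|$ and using the metric inequality, one checks that $x\mapsto\langle\zeta,x\rangle-K(d_{S_1}(x)+d_{S_2}(x))-\sigma\|x-u\|^2$ attains a local maximum at $u$. The Clarke stationarity condition $0\in\partial^0[\langle\zeta,\cdot\rangle-K(d_{S_1}+d_{S_2})-\sigma\|\cdot-u\|^2](u)$ together with the sum rule for $\partial^0$ then yields $0\in\zeta-K(\partial^0 d_{S_1}(u)+\partial^0 d_{S_2}(u))$, whence $\zeta\in\mathbb{R}_+\partial^0 d_{S_1}(u)+\mathbb{R}_+\partial^0 d_{S_2}(u)\subseteq N_{S_1}(u)+N_{S_2}(u)$.

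Finally, every vector of $N_{S_1\cap S_2}(u)$ is a limit of convex combinations of proximal normals, so by convexity of $N_{S_1}(u)+N_{S_2}(u)$ the inclusion just obtained extends to the whole Clarke normal cone, provided $N_{S_1}(u)+N_{S_2}(u)$ is closed. That closedness is exactly what the qualification delivers: two closed convex cones $C_1,C_2$ in $\mathbb{R}^n$ with $C_1\cap(-C_2)=\{0\}$ have closed sum, and $C_i:=N_{S_i}(u)$ satisfy this since $-N_{S_1}(u)\cap N_{S_2}(u)=\{0\}$. This yields $N_{S_1\cap S_2}(u)\subseteq N_{S_1}(u)+N_{S_2}(u)$, as required. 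The one genuinely delicate point is the metric inequality of the second paragraph; the remaining steps are routine manipulation of Lipschitz distance functions together with the calculus rules for $\partial^0$ recorded in Section \ref{sec2}.
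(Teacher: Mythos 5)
The paper does not prove this lemma at all: it is quoted from Clarke's monographs \cite{Clarke2,Clarke3,Clarke4}, so there is no internal proof to compare against. Judged on its own terms, your strategy --- extract a local metric inequality $d_{S_1\cap S_2}\leq\gamma(d_{S_1}+d_{S_2})$ from the qualification condition and then pass to normal cones by exact penalization --- is the standard modern route to this intersection rule, and the overall architecture is sound.

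There are, however, two genuine gaps, the first of which you yourself flag as the main obstacle. In the Ekeland argument for the metric inequality you need the subgradients $\xi_1^n\in\partial^0 d_{S_1}(y_n)$ to stay bounded away from $0$ so that the limit $\xi$ is nonzero. For the Clarke generalized gradient this can fail: at a point $y\notin S$ one may have $0\in\partial^0 d_S(y)$ (take $S$ the unit sphere in $\mathbb{R}^n$ and $y$ its centre; there $d_S(x)=1-\|x\|$ locally, so $\partial^0 d_S(0)=\mathbb{B}^n\ni 0$). The standard repair is to run the whole contradiction argument with proximal or limiting subgradients, for which $\|\xi\|=1$ whenever $\xi\in\partial_P d_S(y)$ and $y\notin S$, and to convexify only at the very end; one must also choose the Ekeland radius strictly smaller than $d_{S_1\cap S_2}(x_n)$ so that the perturbed minimiser $y_n$ is guaranteed to lie outside $S_1\cap S_2$ (otherwise both subgradient sets may contain $0$ trivially). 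Second, in your last step you generate $N_{S_1\cap S_2}(u)$ from proximal normals at $u$ itself; the correct representation builds the Clarke cone from limits of proximal normals at points $u_i\to u$ \emph{inside} $S_1\cap S_2$. The penalization must therefore be carried out at all such nearby points (your metric inequality does hold uniformly there, so this is available), and the passage to the limit then requires the multipliers in $\zeta_i\in\eta_i\,\partial^0 d_{S_1}(u_i)+\eta_i'\,\partial^0 d_{S_2}(u_i)$ to remain bounded --- which is again supplied by the qualification $-N_{S_1}(u)\cap N_{S_2}(u)=\{0\}$ via the same compactness argument you invoke for closedness of the sum, but this needs to be said. With these two repairs the proof goes through; as written it does not.
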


The following result gives the approximate KKT sufficient optimality condition for locally weak minimal solutions of MIOP.
\begin{theorem}\label{th4.4}
Let $\overline{u}\in S$ be a locally weak minimal solution of MIOP and BCQ hold on a neighbourhood of $\overline{u}$. Suppose that $\epsilon_i>0$ for $i=1,2,\cdots$ with $\epsilon_i\rightarrow0$.
Then for any sequence $\{x_i\}$ in $S$ with $x_i\rightarrow \overline{u}$ as $i\rightarrow+\infty$, there is a subsequence $\{z_i\}$ of $\{x_i\}$ such that for every $z_i$, there is $y_i\in S$ satisfying
\begin{itemize}
\item[(i)] $\|z_i-y_i\|\leq\sqrt{\epsilon_i}$;

\item[(ii)] there exist $u_k^i\in\partial^0F_k(y_i)$ for all $k\in K$ and $v_j^i\in\partial^0g_j(y_i)$ for all $j\in J$ satisfying
\begin{equation}\label{eq4.6}
\|\sum_{k\in K}\lambda_k^i u_k^i+\sum_{j\in J}\mu_j^iv_j^i\|\leq\sqrt{\epsilon_i},
\end{equation}
\begin{equation}\label{eq4.7}
\sum_{j\in J}\mu_j^ig_j(y_i)=0,
\end{equation}
where $\lambda^i:=(\lambda_1^i,\lambda_2^i,\cdots,\lambda_m^i)\in\mathbb{R}_+^m$ with $\sum_{k\in K}\lambda_k^i=1$ and
$\mu^i:=(\mu_1^i,\mu_2^i,\cdots,\mu_p^i)\in\mathbb{R}_+^p$.
\end{itemize}
\end{theorem}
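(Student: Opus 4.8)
The plan is to reproduce the max-function-plus-EVP scheme of Theorems \ref{th4.1} and \ref{th4.2}, but now driving Ekeland's principle along the given sequence rather than from $\overline{u}$ itself. First I would fix $\rho>0$ so small that both the local weak minimality of $\overline{u}$ and BCQ hold on $\overline{B}(\overline{u},\rho)$, and introduce the locally Lipschitz function
$$\phi(u):=\max\{F^c_k(u)-F^c_k(\overline{u}),\,F^w_k(u)-F^w_k(\overline{u}):k\in K\}.$$
Because $\overline{u}$ is a locally weak minimal solution, no $u\in S$ near $\overline{u}$ satisfies $F_k(u)\prec_{CW}F_k(\overline{u})$ for all $k\in K$, so $\phi(u)\ge0$ on $\overline{B}(\overline{u},\rho)\cap S$, while $\phi(\overline{u})=0$. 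Hence $\inf_{u\in\overline{B}(\overline{u},\rho)\cap S}\phi(u)=0$, and this closed set, being complete, is the arena for EVP.

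Next I would process the sequence. Since $x_i\to\overline{u}$ and $\phi$ is continuous with $\phi(\overline{u})=0$, we have $\phi(x_i)\to0$; thus I can extract a subsequence $z_i:=x_{n_i}$ (with $n_i$ strictly increasing and $z_i\in\overline{B}(\overline{u},\rho)$) such that $0\le\phi(z_i)\le\epsilon_i$, which is the sole reason a subsequence is required. So $z_i$ is an $\epsilon_i$-approximate minimizer of $\phi$ on $\overline{B}(\overline{u},\rho)\cap S$, and applying EVP (\cite{Ekeland1}) with radius parameter $\sqrt{\epsilon_i}$ produces $y_i\in\overline{B}(\overline{u},\rho)\cap S$ with $\|y_i-z_i\|\le\sqrt{\epsilon_i}$, which is (i), and
$$\phi(y_i)\le\phi(u)+\tfrac{\epsilon_i}{\sqrt{\epsilon_i}}\|u-y_i\|=\phi(u)+\sqrt{\epsilon_i}\,\|u-y_i\|,\qquad\forall u\in\overline{B}(\overline{u},\rho)\cap S.$$
Since $\|y_i-z_i\|\le\sqrt{\epsilon_i}\to0$ and $z_i\to\overline{u}$, we have $y_i\to\overline{u}$, so for $i$ large $y_i$ lies in the interior of the ball and is therefore a genuine local minimizer of $\Phi_i(\cdot):=\phi(\cdot)+\sqrt{\epsilon_i}\|\cdot-y_i\|$ on $S$.

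The final step is the nonsmooth calculus already used in Theorems \ref{th4.1}--\ref{th4.2}. Viewing $y_i$ as a local minimizer of $\Phi_i(\cdot)+(L+1)d_S(\cdot)$ and using $N_S(y_i)=\mathrm{cl}(\mathbb{R}_+\partial^0 d_S(y_i))$ (Proposition 11.2.4 of \cite{Schi}) together with the sum rule gives $0\in\partial^0\phi(y_i)+N_S(y_i)+\sqrt{\epsilon_i}\mathbb{B}^n$. The max rule and the definition $\partial^0 F_k=\mathrm{co}\{\partial^0 F^c_k,\partial^0 F^w_k\}$ yield $\partial^0\phi(y_i)\subset\mathrm{co}\{\partial^0 F_k(y_i):k\in K\}$, so the $\partial^0\phi$-term can be written as $\sum_{k\in K}\lambda_k^i u_k^i$ with $u_k^i\in\partial^0F_k(y_i)$, $\lambda^i\in\mathbb{R}_+^m$, $\sum_{k\in K}\lambda_k^i=1$. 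Because BCQ holds at $y_i$, Lemma \ref{le4.1} rewrites the $N_S(y_i)$-term as $\sum_{j\in I(y_i)}\mu_j^i v_j^i$ with $\mu_j^i\ge0$ and $v_j^i\in\partial^0 g_j(y_i)$; setting $\mu_j^i=0$ for $j\notin I(y_i)$ then delivers (\ref{eq4.6}), and since $g_j(y_i)=0$ on $I(y_i)$ while $\mu_j^i=0$ off it, we obtain $\sum_{j\in J}\mu_j^i g_j(y_i)=0$, which is (\ref{eq4.7}).

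The main obstacle is the localization bookkeeping: EVP only certifies the perturbed inequality on the chosen ball, so one must confirm that $y_i$ eventually sits in the \emph{interior} of $\overline{B}(\overline{u},\rho)$ --- guaranteed by $y_i\to\overline{u}$ --- so that the subdifferential sum rule and BCQ, valid only on that neighbourhood, may legitimately be invoked at $y_i$. The second point requiring care is the matching of the EVP approximation level $\epsilon_i$ with the radius $\sqrt{\epsilon_i}$, which makes both the distance bound in (i) and the residual in (\ref{eq4.6}) equal to $\sqrt{\epsilon_i}$.
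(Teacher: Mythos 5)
Your proof is correct, and it reaches the conclusion by a genuinely more direct route than the paper. The paper first shows that a subsequence $\{z_i\}$ consists of weak $\overline{\varepsilon}_i$-minimal solutions \emph{in the interval order} on $X=S\cap U_\delta(\overline{u})$, then invokes its multiobjective interval-valued Ekeland principle (Proposition \ref{p3.2}) to produce $y_i$, and only afterwards scalarizes through the proofs of Theorem \ref{th4.2} and Corollary \ref{co4.1}. You instead scalarize at the outset with the single max-function $\phi(u)=\max\{F^c_k(u)-F^c_k(\overline{u}),F^w_k(u)-F^w_k(\overline{u}):k\in K\}$, observe that local weak minimality is exactly the statement $\phi\ge 0=\phi(\overline{u})$ on the ball, extract the subsequence merely from $\phi(x_i)\to 0$, and then apply the classical scalar EVP of \cite{Ekeland1}. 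This bypasses Proposition \ref{p3.2} entirely and replaces the paper's interval-order bookkeeping for the subsequence with the one-line continuity argument $\phi(z_i)\le\epsilon_i$; the endgame (exact penalization, $N_S(y_i)=\mathrm{cl}(\mathbb{R}_+\partial^0 d_S(y_i))$, the max rule giving $\partial^0\phi(y_i)\subset\mathrm{co}\{\partial^0F_k(y_i):k\in K\}$, and Lemma \ref{le4.1} under BCQ) coincides with the paper's. What the paper's route buys is a demonstration of its vector-valued EVP machinery and an argument that stays at the level of the interval order until the last step; what yours buys is brevity and reliance only on classical tools. The two points you flag as delicate --- that $y_i$ must eventually lie in the interior of the localization ball so that $N_{S\cap U}(y_i)\subset N_S(y_i)$ and BCQ applies at $y_i$, and the calibration of the EVP parameters $(\epsilon_i,\sqrt{\epsilon_i})$ --- are exactly the points the paper also handles (the former via Lemma \ref{le4.2} and a ``without loss of generality'' re-indexing), so your treatment is at the same level of rigor.
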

\begin{proof}
Because $\overline{u}\in S$ is a locally weak minimal solution of MIOP, we can find a closed neighbourhood $U_{\delta}(\overline{u}):=\{z\in \mathbb{R}^n:\|z-\overline{u}\|\leq \delta\}$ of $\overline{u}$ for some $\delta>0$ and there is no $z\in S\bigcap U_{\delta}(\overline{u})$ satisfying
\begin{equation}\label{eq4.8}
F_k(z)\prec_{CW} F_k(\overline{u}),\; \forall k\in K.
\end{equation}
Let $X:=S\bigcap U_{\delta}(\overline{u})$. Thus, $X$ is closed and bounded. For any sequence $\{x_i\}$ in $S$ with $x_i\rightarrow \overline{u}$ as $i\rightarrow+\infty$, we have $x_i\in U_\delta(\overline{u})$ for $i$ large enough, which means that $x_i\in X$ for $i$ large enough.

Now we first want to show that there exists a subsequence $\{z_i\}$ of $\{x_i\}$ such that $z_i\in X$ is a weak $\overline{\varepsilon}_i$-minimal solution for MIOP
under the feasible set $X$, where $\overline{\varepsilon}_i:=(\epsilon_i,\epsilon_i,\cdots,\epsilon_i)\in\mathbb{R}^m_+$.
In fact, since $F_k$ is locally Lipschitz continuous on $\mathbb{R}^n$ for $k\in K$, by Remark \ref{re2.2}, we have
$$F^c_k(x_i)\rightarrow F^c_k(\overline{u}),\;F^w_k(x_i)\rightarrow F^w_k(\overline{u}) \; \textrm{as}\; i\rightarrow+\infty.$$
Hence, for given
$\epsilon_1>0$, for every $k$, there is $N_1^k>0$ such that, for all $i\geq N_1^k$,
$$
|F^c_k(x_i)-F^c_k(\overline{u})|<\frac{1}{2}\epsilon_1,\; |F^w_k(x_i)-F^w_k(\overline{u})|<\frac{1}{2}\epsilon_1.
$$
Thus, we can choose $N_1:=\max\{N_1^1,N_1^2,\cdots,N_1^m\}$ such that, for all $k\in K$, $i\geq N_1$,
$$
|F^c_k(x_i)-F^c_k(\overline{u})|<\frac{1}{2}\epsilon_1,\; |F^w_k(x_i)-F^w_k(\overline{u})|<\frac{1}{2}\epsilon_1.
$$
Now we choose $z_1=x_{N_1}$ and thus $|F^c_k(z_1)-F^c_k(\overline{u})|<\frac{1}{2}\epsilon_1$ with $|F^w_k(z_1)-F^w_k(\overline{u})|<\frac{1}{2}\epsilon_1$. This means that
\begin{equation}\label{eq4.9}
F^c_k(z_1)<F^c_k(\overline{u})+\frac{1}{2}\epsilon_1,\;F^w_k(z_1)<F^w_k(\overline{u})+\frac{1}{2}\epsilon_1,\;\forall k\in K.
\end{equation}
Thanks to (\ref{eq4.9}), we can deduce that there is no $z\in X$ satisfying $F_k(z)+[0,\epsilon_1]\prec_{CW} F_k(z_1)$ for all $k\in K$.
If not, then there exists $u'\in X$ satisfying $F_k(u')+[0,\epsilon_1]\prec_{CW} F_k(z_1)$ for all $k\in K$, which means that
$F^c_k(u')+\frac{1}{2}\epsilon_1<F^c_k(z_1)$ and $F^w_k(u')+\frac{1}{2}\epsilon_1<F^w_k(z_1)$.
It follows from (\ref{eq4.9}) that $F^c_k(u')<F^c_k(\overline{u})$ and $F^w_k(u')<F^w_k(\overline{u})$ for all $k\in K$.
So we have  $F_k(u')\prec_{CW}F_k(\overline{u})$ for all $k\in K$, which contradicts with (\ref{eq4.8}).

Letting $\epsilon_2<\epsilon_1$, a similar argument can be applied to $\{x_{N_1},x_{{N_1}+1},\cdots\}$. Then we can choose $N_2$ with
$N_2>N_1$ and set $z_2=x_{N_2}$ such that there is no $z\in X$ satisfying $F_k(z)+[0,\epsilon_2]\prec_{CW} F_k(z_2)$ for all $k\in K$.
Proceeding in this way, we can construct the subsequence $\{z_i\}$ of $\{x_i\}$ such that $z_i\in X$ and there is no $z\in X$ satisfying
$F_k(z)+[0,\epsilon_i]\prec_{CW} F_k(z_i)$ for all $k\in K$. Thus, we obtain that $z_i$ is a weak $\overline{\varepsilon}_i$-minimal solution for (MIOP) under the feasible set $X$.

For any $k\in K$, since $F_k$ is locally Lipschitz continuous on $\mathbb{R}^n$, by Remark \ref{re2.2}, $F_k$ is $\preccurlyeq_{CW}$-lsc on $X$, Now applying the multiobjective interval-valued version of EVP (Proposition \ref{p3.2}) on $X$, we can see that for every $z_i\in X$, there exists $y_i\in X$ satisfying $\|y_i-z_i\|\leq\sqrt{\epsilon_i}$, and there is no $z\in X$ satisfying
\begin{itemize}
\item[(a)] $F_k(z)+[0,\epsilon_i]\prec_{CW} F_k(y_i),\; \forall k\in K$;

\item[(b)] $F_k(z)+[0,\sqrt{\epsilon_i}\|z-y_i\|]\prec_{CW} F_k(y_i), \; \forall k\in K$.
\end{itemize}
As a result, we know that $y_i$ is a weak minimal solution for the problem as follows:
$$
\quad \textrm{Min}\; \overline{F}(u):=(F_1(u)+[0,\sqrt{\epsilon_i}\|u-y_i\|],\cdots,
F_m(u)+[0,\sqrt{\epsilon_i}\|u-y_i\|])\quad \textrm{s.t.} \;\; u\in X.
$$
By the proofs of Theorem \ref{th4.2} and Corollary \ref{co4.1}, there exists $\lambda^i\in\mathbb{R}^m_+$ with $\sum_{k\in K}\lambda_k^i=1$, such that
\begin{equation}\label{eq4.10}
0\in\sum_{k\in K}\lambda_k^i\partial^0 F_k(y_i)+\sqrt{\epsilon_i} \mathbb{B}^n+N_X(y_i).
\end{equation}
Noting that $x_i\rightarrow \overline{u}$ and the subsequence $\{z_i\}$ of $\{x_i\}$, one has $z_i\in S\bigcap\textrm{int}U_{\delta}(\overline{u})$ for $i$ large enough, where $\textrm{int}U_{\delta}(\overline{u})=\{z\in \mathbb{R}^n:\|z-\overline{u}\|<\delta\}$. Since $\|y_i-z_i\|\leq\sqrt{\epsilon_i}$, we
have
$$\|y_i-\overline{u}\|\leq\|y_i-z_i\|+\|z_i-\overline{u}\|\leq\sqrt{\epsilon_i}+\|z_i-\overline{u}\|.$$
When $i\rightarrow+\infty$, we can see $y_i\in\textrm{int}U_{\delta}(\overline{u})$ for $i$ large enough. Without loss of generality, we can assume that $y_i\in\textrm{int}U_{\delta}(\overline{u})$.
According to the convexity of $U_{\delta}(\overline{u})$, we can easily see $N_{U_{\delta}(\overline{u})}(y_i)=\{0\}$ and $-N_{S}(y_i)\bigcap N_{U_{\delta}(\overline{u})}(y_i)=\{0\}$. Thanks to Lemma \ref{le4.2}, we obtain
$N_X(y_i)=N_{S\cap U_{\delta}(\overline{u})}(y_i)\subset N_S(y_i)$.
Thus, the relation (\ref{eq4.10}) implies that
\begin{equation}\label{eq4.11}
0\in\sum_{k\in K}\lambda_k^i\partial^0 F_k(y_i)+\sqrt{\epsilon_i} \mathbb{B}^n+N_S(y_i).
\end{equation}
Since the condition BCQ holds on a neighbourhood of $\overline{u}$, it follows from Lemma \ref{le4.1} that
$$N_S(y_i)\subset\{\eta\xi:\eta\geq0,\xi \in co\{\partial^0g_j(y_i):j\in I(y_i)\}.$$
Combining the above inclusion relation with (\ref{eq4.11}), there exist $\mu_j^i\geq0$ not all zero for $j\in I(y_i)$ and $\mu_j^i=0$ for $j\not\in I(y_i)$, $\lambda^i\in\mathbb{R}^m_+$, $\sum_{k\in K}\lambda_k^i=1$, such that (\ref{eq4.6}) and (\ref{eq4.7}) hold.
\end{proof}
\begin{remark}
Theorem \ref{th4.4} shows that for every a locally minimal solution of MIOP, there exists a sequence converging to the point and the sequence has a subsequence of approximate solutions which satisfies certain approximate KKT conditions.
\end{remark}
\begin{remark}
It should be noted that Theorem \ref{th4.4} is different from Theorem 3.6 of \cite{KSDD} in the following aspects: (i) Theorem \ref{th4.4} does not require any convexity; (ii) Theorem \ref{th4.4} is established in multi-interval cases.
\end{remark}

We present the approximate KKT condition for locally weak $\varepsilon$-quasi-minimal solutions of MIOP.
\begin{corollary}\label{co4.2}
Let $\varepsilon\in\mathbb{R}^m_+\backslash\{0\}$ and let $\overline{u}\in S$ be a locally weak $\varepsilon$-quasi-minimal solution of MIOP and BCQ hold on a neighbourhood of $\overline{u}$. Suppose that $\epsilon_i>0$ for $i=1,2,\cdots$ with $\epsilon_i\rightarrow0$.
Then for any sequence $\{x_i\}$ in $S$ with $x_i\rightarrow \overline{u}$ as $i\rightarrow+\infty$, there is a subsequence $\{z_i\}$ of $\{x_i\}$ such that for every $z_i$, there is $y_i\in S$ satisfying
\begin{itemize}
\item[(i)] $\|z_i-y_i\|\leq\sqrt{\epsilon_i}$;

\item[(ii)] there exist $u_k^i\in\partial^0F_k(y_i)+
\varepsilon_k\mathbb{B}^n$ for all $k\in K$ and $v_j^i\in\partial^0g_j(y_i)$ for all $j\in J$ satisfying
\begin{equation}\label{eq4.12}
\|\sum_{k\in K}\lambda_k^i u_k^i+\sum_{j\in J}\mu_j^iv_j^i\|\leq\sqrt{\epsilon_i},
\end{equation}
\begin{equation}\label{eq4.13}
\sum_{j\in J}\mu_j^ig_j(y_i)=0,
\end{equation}
where $\lambda^i:=(\lambda_1^i,\lambda_2^i,\cdots,\lambda_m^i)\in\mathbb{R}_+^m$ with $\sum_{k\in K}\lambda_k^i=1$ and
$\mu^i:=(\mu_1^i,\mu_2^i,\cdots,\mu_p^i)\in\mathbb{R}_+^p$.
\end{itemize}
\end{corollary}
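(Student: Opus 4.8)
The plan is to reduce the claim to Theorem \ref{th4.4} by passing to a modified objective that absorbs the quasi-minimality penalty. For each $k\in K$, define the modified interval-valued function
$$\widetilde{F}_k(u):=F_k(u)+[0,\varepsilon_k\|u-\overline{u}\|],\quad u\in\mathbb{R}^n,$$
and set $\widetilde{F}:=(\widetilde{F}_1,\ldots,\widetilde{F}_m)$. First I would record the two structural facts that make this reduction work. Since $\widetilde{F}_k(\overline{u})=F_k(\overline{u})+[0,0]=F_k(\overline{u})$, the defining condition of Definition \ref{de2.5}(iii) for $\overline{u}$ being a locally weak $\varepsilon$-quasi-minimal solution of MIOP is literally the statement that there is no $z\in S\cap U(\overline{u})$ with $\widetilde{F}_k(z)\prec_{CW}\widetilde{F}_k(\overline{u})$ for all $k\in K$; that is, $\overline{u}$ is a locally weak minimal solution of the modified problem $\mathrm{Min}\,\widetilde{F}(u)$ subject to $u\in S$. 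Moreover, since $F_k$ is locally Lipschitz continuous and $u\mapsto\varepsilon_k\|u-\overline{u}\|$ is Lipschitz, the center and width $\widetilde{F}_k^c=F_k^c+\tfrac{1}{2}\varepsilon_k\|\cdot-\overline{u}\|$ and $\widetilde{F}_k^w=F_k^w+\tfrac{1}{2}\varepsilon_k\|\cdot-\overline{u}\|$ are locally Lipschitz, so by Remark \ref{re2.2}(d) each $\widetilde{F}_k$ is locally Lipschitz continuous on $\mathbb{R}^n$.

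Next I would simply invoke Theorem \ref{th4.4} for the modified problem. The feasible set $S$ and the functions $g_j$ are unchanged, so BCQ still holds on a neighbourhood of $\overline{u}$, and $\overline{u}$ is a locally weak minimal solution of $\widetilde{F}$ on $S$ by the first observation above. Theorem \ref{th4.4} then furnishes, for any sequence $\{x_i\}\subset S$ converging to $\overline{u}$, a subsequence $\{z_i\}$ and associated points $y_i\in S$ with $\|z_i-y_i\|\leq\sqrt{\epsilon_i}$, together with multipliers $\lambda^i\in\mathbb{R}^m_+$ (with $\sum_{k\in K}\lambda_k^i=1$) and $\mu^i\in\mathbb{R}^p_+$, and elements $u_k^i\in\partial^0\widetilde{F}_k(y_i)$, $v_j^i\in\partial^0 g_j(y_i)$ satisfying
$$\Big\|\sum_{k\in K}\lambda_k^i u_k^i+\sum_{j\in J}\mu_j^i v_j^i\Big\|\leq\sqrt{\epsilon_i}\quad\text{and}\quad\sum_{j\in J}\mu_j^i g_j(y_i)=0.$$
This already yields conclusion (i) together with \eqref{eq4.12} and \eqref{eq4.13}, except that the membership of $u_k^i$ is in $\partial^0\widetilde{F}_k(y_i)$ rather than in $\partial^0 F_k(y_i)+\varepsilon_k\mathbb{B}^n$.

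Finally I would upgrade the subdifferential membership. By Remark \ref{re2.3} applied with $\delta=\varepsilon_k$ at the base point $\overline{u}$, one has
$$\partial^0\widetilde{F}_k(y_i)=\partial^0\big(F_k(\cdot)+[0,\varepsilon_k\|\cdot-\overline{u}\|]\big)(y_i)\subset\partial^0 F_k(y_i)+\varepsilon_k\mathbb{B}^n,$$
so each $u_k^i$ indeed lies in $\partial^0 F_k(y_i)+\varepsilon_k\mathbb{B}^n$, which is exactly conclusion (ii). This completes the argument. I expect no serious obstacle: the proof is a direct transcription of the passage from Theorem \ref{th4.2} to Corollary \ref{co4.1}, now carried out at the level of Theorem \ref{th4.4}. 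The only points needing care are verifying $\widetilde{F}_k(\overline{u})=F_k(\overline{u})$, so that the quasi-minimality of $\overline{u}$ for $F$ becomes genuine minimality for $\widetilde{F}$, and confirming that $\widetilde{F}_k$ inherits local Lipschitz continuity so that Theorem \ref{th4.4} is applicable.
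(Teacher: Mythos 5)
Your proposal is correct and follows essentially the same route as the paper: both reduce the claim to Theorem \ref{th4.4} by observing that a locally weak $\varepsilon$-quasi-minimal solution of MIOP is a locally weak minimal solution of the penalized problem with objective $F_k(\cdot)+[0,\varepsilon_k\|\cdot-\overline{u}\|]$, and then use the inclusion $\partial^0(F_k(\cdot)+[0,\varepsilon_k\|\cdot-\overline{u}\|])\subset\partial^0F_k(\cdot)+\varepsilon_k\mathbb{B}^n$ from Remark \ref{re2.3} to obtain the stated form of (ii). Your additional checks (that the penalty vanishes at $\overline{u}$ and that the modified objective remains locally Lipschitz) are details the paper leaves implicit but do not change the argument.
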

\begin{proof}
Since $\overline{u}\in S$ is a locally weak $\varepsilon$-quasi-minimal solution of MIOP, we can see that $\overline{u}$ is a locally weak minimal solution of the following problem:
$$
\quad \textrm{Min}\; \overline{F}(u):=(F_1(u)+[0,\varepsilon_1\|u-\overline{u}\|],\cdots,
F_m(u)+[0,\varepsilon_m\|u-\overline{u}\|])\quad \textrm{s.t.} \;\; u\in S.
$$
Due to Remark \ref{re2.3}, one has $$\partial^0(F_k(u)+[0,\varepsilon_k\|u-\overline{u}\|])\subset\partial^0F_k(u)+
\varepsilon_k\mathbb{B}^n.$$
Employing Theorem \ref{th4.4}, the parts (i) and (ii) can be proved.
\end{proof}

\section{An application to a multiobjective interval-valued noncooperative game}\label{sec5}
\setcounter{equation}{0}
Let us consider a noncooperative game for multiobjective interval-valued functions as the following form
  $$G:=(\Lambda,\{S^i\}_{i\in \Lambda},\{F^i\}_{i\in \Lambda}),$$
 where
\begin{itemize}
\item[(i)] $\Lambda:=\{1,2,\cdots,l\}$ is a finite set of players;
\item[(ii)] for each player $i\in\Lambda$, $S^i$ is a strategy set in $\mathbb{R}^n$, where $S^i:=\{u\in\mathbb{R}^n:g^i_j(u)\leq0,j\in J\}$,
$g^i_j:\mathbb{R}^n\to\mathbb{R}$ is a function for all $j\in J$;
\item[(iii)] $\widehat{S}:=\prod_{i\in\Lambda}S^i$ is the strategy space;
\item[(iv)] for each player $i\in\Lambda$, $F^i(u):=(F^i_1(u),F^i_2(u),\cdots,F^i_m(u))$ is a multiobjective interval-valued loss function on $\widehat{S}$, where $F^i_k(u)=[\underline{F}^i_k(u),\overline{F}^i_k(u)]$ with $F^{ic}_k(u)=\frac{\underline{F}^i_k(u)+\overline{F}^i_k(u)}{2}$ and
    $F^{iw}_k(u)=\frac{\overline{F}^i_k(u)-\underline{F}^i_k(u)}{2}$, $k\in K$.
\end{itemize}
Let $S^{-i}:=\prod_{j\in\Lambda\backslash\{i\}}S^j$.  For each $i\in\Lambda$, we define
$u_{-i}:=(u_1,\cdots,u_{i-1},u_{i+1},\cdots,u_l)\in S^{-i}.$
Moreover, for every $u_i\in S^i$, set $(u_i,u_{-i}):=(u_1,\cdots,u_{i-1},u_i,u_{i+1},\cdots,u_l)\in \widehat{S}$.

\begin{definition}\label{de5.1}
Given $\varepsilon\in\mathbb{R}^m_+$, a strategy profile $\overline{u}=(\overline{u}_1,\overline{u}_2,\cdots,\overline{u}_l)\in \widehat{S}$ is called
\begin{itemize}
\item [(i)] a weak $\varepsilon$-Nash equilibrium (for short, w$\varepsilon$-NE) of $G$ if, for any $i\in\Lambda$, there is no $y_i\in S^i$ such that $$F^i_k(y_i,\overline{u}_{-i})+[0,\varepsilon_k]\prec_{CW} F^i_k(\overline{u}),\;
\forall k\in K;
$$
\item[(ii)] a weak $\varepsilon$-quasi Nash equilibrium (for short, w$\varepsilon$-QNE) of $G$ if, for any $i\in\Lambda$, there is no $y_i\in S^i$ such that $$F^i_k(y_i,\overline{u}_{-i})+[0,\varepsilon_k \|\overline{u}_i-y_i\|]\prec_{CW} F^i_k(\overline{u}),\;
\forall k\in K.
$$
\end{itemize}
\end{definition}

\begin{remark}\label{re5.1}
A point $\overline{u}:=(\overline{u}_1,\overline{u}_2,\cdots,\overline{u}_l)\in \widehat{S}$ is a w$\varepsilon$-QNE (resp., w$\varepsilon$-NE) iff $\overline{u}_i\in S^i$ is a weak $\varepsilon$-quasi-minimal (resp., weak $\varepsilon$-minimal) solution
of the following problem:
$$
\textrm{Min}\; F^i(u_i,\overline{u}_{-i}):=(F^i_1(u_i,\overline{u}_{-i}),F^i_2(u_i,\overline{u}_{-i}),\cdots,F^i_m(u_i,\overline{u}_{-i}))\quad \textrm{s.t.} \;\; u_i\in S^i.
$$
\end{remark}

Now we derive KKT optimality conditions of approximate Nash equilibrium for $G$.
\begin{theorem}\label{th5.1}
Let $\varepsilon\in\mathbb{R}^m_+\backslash\{0\}$ and $\overline{u}:=(\overline{u}_1,\overline{u}_2,\cdots,\overline{u}_l)\in \widehat{S}$ be a w$\varepsilon$-NE of $G$. For each $i\in\Lambda$, $F^i_k(\cdot,\overline{u}_{-i})$ and $g^i_j$ are locally Lipschitz continuous on $\mathbb{R}^n$ for all $k\in K$ and all $j\in J$. Moreover, given $\delta>0$, suppose that BCQ holds on $\overline{B}^i(\overline{u}_i,\delta)$ for all $i\in\Lambda$. Then, for every $i\in\Lambda$, there exists $x_\delta^i\in \overline{B}^i(\overline{u}_i,\delta)$, $\lambda^i:=(\lambda^i_1,\lambda^i_2,\cdots,\lambda^i_m)\in\mathbb{R}^m_+$, $\mu^i:=(\mu^i_1,\mu^i_2,\cdots,\mu^i_p)\in\mathbb{R}^p_+$, with $\sum_{k\in K}\lambda^i_k=1$, such that
\begin{equation}\label{eq5.1}
\begin{split}
0\in&\sum_{k\in K}\lambda^i_k\partial^0 F^i_k(x_\delta^i,\overline{u}_{-i})+\sum_{j\in I(x_\delta^i)}\mu^i_j\partial^0 g^i_j(x_\delta^i)+\frac{1}{\delta}\max_{k\in K}\{\varepsilon_k\}\mathbb{B}^n,\\
&\mu^i_jg^i_j(x_\delta^i)=0, \;\forall j\in J,
\end{split}
\end{equation}
where $\overline{B}^i(\overline{u}_i,\delta):=\{x\in S^i:\|x-\overline{u}_i\|\leq\delta\}$.
\end{theorem}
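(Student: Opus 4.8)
The plan is to decouple the game into $l$ independent multiobjective interval-valued optimization problems, one for each player, and then apply Theorem \ref{th4.1} to each of them. The bridge is Remark \ref{re5.1}, which characterizes a w$\varepsilon$-NE componentwise, so that the equilibrium notion is translated into an approximate weak-minimality statement of exactly the type handled in Section \ref{sec4}.

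First I would fix an arbitrary player $i\in\Lambda$ and freeze the remaining players' strategies at $\overline{u}_{-i}$. Since $\overline{u}$ is a w$\varepsilon$-NE of $G$, Remark \ref{re5.1} guarantees that $\overline{u}_i$ is a weak $\varepsilon$-minimal solution of the parametrized problem
$$\textrm{Min}\; F^i(u_i,\overline{u}_{-i}):=(F^i_1(u_i,\overline{u}_{-i}),\cdots,F^i_m(u_i,\overline{u}_{-i}))\quad\textrm{s.t.}\;\; u_i\in S^i,$$
that is, $\overline{u}_i\in M(F^i(\cdot,\overline{u}_{-i}),S^i,\varepsilon)$. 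This converts the fixed-point/equilibrium condition into the approximate-minimality hypothesis required by Theorem \ref{th4.1}.

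Next I would check that this parametrized problem satisfies the standing assumptions of Theorem \ref{th4.1}. By hypothesis, for each $k\in K$ and $j\in J$ the functions $F^i_k(\cdot,\overline{u}_{-i})$ and $g^i_j$ are locally Lipschitz continuous on $\mathbb{R}^n$, and BCQ is assumed to hold on $\overline{B}^i(\overline{u}_i,\delta)$. Applying Theorem \ref{th4.1} with objective $F^i(\cdot,\overline{u}_{-i})$, constraint functions $\{g^i_j\}_{j\in J}$, feasible set $S^i$, and the given $\delta>0$, we obtain a point $x_\delta^i\in\overline{B}^i(\overline{u}_i,\delta)$, together with multipliers $\lambda^i\in\mathbb{R}^m_+$ satisfying $\sum_{k\in K}\lambda^i_k=1$ and $\mu^i\in\mathbb{R}^p_+$, for which the stationarity inclusion and the complementarity condition in \eqref{eq5.1} hold verbatim.

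Since $i\in\Lambda$ was arbitrary, the conclusion follows for every player, which completes the argument. The step requiring the most care is the reduction via Remark \ref{re5.1}: one must confirm that the local Lipschitz and BCQ hypotheses, here stated in the game setting relative to each player's \emph{own} constraint system $\{g^i_j\}$ and strategy set $S^i$, coincide exactly with the single-problem hypotheses of Theorem \ref{th4.1}. Because the feasible set appearing in that theorem is precisely the player's own $S^i$ rather than the product strategy space $\widehat{S}$, and BCQ is posed with respect to $\{g^i_j\}$, this matching is immediate and no genuine obstacle arises beyond the bookkeeping of the superscript $i$.
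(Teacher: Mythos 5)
Your proposal is correct and follows exactly the paper's own argument: the paper's proof of Theorem \ref{th5.1} is a one-line invocation of Remark \ref{re5.1} to reduce the w$\varepsilon$-NE condition to $\overline{u}_i\in M(F^i(\cdot,\overline{u}_{-i}),S^i,\varepsilon)$ for each player, followed by an application of Theorem \ref{th4.1}. Your write-up simply makes explicit the hypothesis-matching that the paper leaves implicit.
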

\begin{proof}
Invoking Remark \ref{re5.1} and Theorem \ref{th4.1}, we can obtain the optimality condition (\ref{eq5.1}).
\end{proof}

\begin{theorem}\label{th5.2}
Let $\varepsilon\in\mathbb{R}^m_+\backslash\{0\}$ and $\overline{u}:=(\overline{u}_1,\overline{u}_2,\cdots,\overline{u}_l)\in \widehat{S}$ be a w$\varepsilon$-QNE of $G$. For each $i\in\Lambda$, $F^i_k(\cdot,\overline{u}_{-i})$ and $g^i_j$ are locally Lipschitz continuous on $\mathbb{R}^n$ for all $k\in K$ and all $j\in J$. Moreover, suppose that BCQ holds at $\overline{u}_i\in S^i$ for all $i\in\Lambda$. Then, for every $i\in\Lambda$, there exists $\lambda^i:=(\lambda^i_1,\lambda^i_2,\cdots,\lambda^i_m)\in\mathbb{R}^m_+$, $\mu^i:=(\mu^i_1,\mu^i_2,\cdots,\mu^i_p)\in\mathbb{R}^p_+$, with $\sum_{k\in K}\lambda^i_k=1$, such that
\begin{equation}\label{eq5.2}
\begin{split}
0\in&\sum_{k\in K}\lambda^i_k\partial^0 F^i_k(\overline{u}_i,\overline{u}_{-i})+\sum_{j\in I(\overline{u}_i)}\mu^i_j\partial^0 g^i_j(\overline{u}_i)+\sum_{k\in K}\lambda^i_k\varepsilon_k \mathbb{B}^n,\\
&\mu^i_jg^i_j(\overline{u}_i)=0, \;\forall j\in J.
\end{split}
\end{equation}
\end{theorem}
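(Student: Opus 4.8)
The plan is to reduce this game-theoretic statement to the single-player optimization result already established in Section \ref{sec4}, exactly as in the proof of Theorem \ref{th5.1}, but now invoking the quasi-minimal version of the KKT conditions rather than the $\varepsilon$-minimal one. First I would appeal to Remark \ref{re5.1}: since $\overline{u}$ is a w$\varepsilon$-QNE of $G$, for every fixed player $i\in\Lambda$ the component $\overline{u}_i$ is a weak $\varepsilon$-quasi-minimal solution of the MIOP
$$
\textrm{Min}\; F^i(u_i,\overline{u}_{-i})\quad \textrm{s.t.}\;\; u_i\in S^i,
$$
that is, $\overline{u}_i\in QM(F^i(\cdot,\overline{u}_{-i}),S^i,\varepsilon)$. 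Here the remaining coordinates $\overline{u}_{-i}$ are frozen, so the objective becomes a genuine multiobjective interval-valued function of the single decision variable $u_i$, and the constraint set $S^i=\{u\in\mathbb{R}^n:g^i_j(u)\leq0,j\in J\}$ is precisely of the type required by the results of Section \ref{sec4}.

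Next I would verify that the hypotheses of Corollary \ref{co4.1} are met for this reduced problem. By assumption $F^i_k(\cdot,\overline{u}_{-i})$ and $g^i_j$ are locally Lipschitz continuous on $\mathbb{R}^n$ for all $k\in K$ and $j\in J$, and BCQ holds at $\overline{u}_i\in S^i$; this is exactly the standing assumption under which Corollary \ref{co4.1} was proved. Applying Corollary \ref{co4.1} to the reduced problem at the point $\overline{u}_i$ then produces multipliers $\lambda^i\in\mathbb{R}^m_+$ with $\sum_{k\in K}\lambda^i_k=1$ and $\mu^i\in\mathbb{R}^p_+$ satisfying $\mu^i_j g^i_j(\overline{u}_i)=0$ for all $j\in J$ together with the inclusion (\ref{eq5.2}), where the weakly generalized gradients $\partial^0 F^i_k(\overline{u}_i,\overline{u}_{-i})$ are understood with respect to the first (active) argument $u_i$.

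Since the player $i$ is arbitrary, repeating the argument for each $i\in\Lambda$ yields the full conclusion. I do not expect any substantial obstacle here: the genuine content is the decoupling provided by Remark \ref{re5.1}, and the analytic work has already been absorbed into Corollary \ref{co4.1} (which in turn rests on Theorem \ref{th4.2} and Lemma \ref{le4.1}). The only point deserving care is the bookkeeping---that all subdifferentials are taken in the variable $u_i$ alone with $\overline{u}_{-i}$ held fixed---so that the local Lipschitz hypothesis on $F^i_k(\cdot,\overline{u}_{-i})$ is exactly what legitimizes forming $\partial^0 F^i_k(\overline{u}_i,\overline{u}_{-i})$ and identifying $I(\overline{u}_i)$ with the active index set of the frozen player problem.
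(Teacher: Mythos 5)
Your proposal is correct and follows exactly the paper's own route: the paper proves Theorem \ref{th5.2} by invoking Remark \ref{re5.1} to reduce to the frozen single-player problem and then applying Corollary \ref{co4.1}, which is precisely your argument (you simply spell out the bookkeeping more explicitly). No gaps.
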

\begin{proof}
By Remark \ref{re5.1} and Corollary \ref{co4.1}, we can obtain the optimality condition (\ref{eq5.2}).
\end{proof}

Similar to Definition \ref{de4.2}, we introduce the following notion.
\begin{definition}\label{de5.2}
Let $\overline{u}:=(\overline{u}_1,\overline{u}_2,\cdots,\overline{u}_l)\in \widehat{S}$. For each $i\in\Lambda$, $F^i_k(\cdot,\overline{u}_{-i})$ and $g^i_j$ are locally Lipschitz continuous on $\mathbb{R}^n$ for all $k\in K$ and all $j\in J$. $(F^i(\cdot,\overline{u}_{-i}),g^i_J)$  is called generalized convex at $x_0^i\in S^i$ if for any $x^i\in S^i$, $\widehat{x}^i_k\in\partial^0F^i_k(x^i_0,\overline{u}_{-i})$ for $k\in K$ and $\widehat{z}^i_j\in \partial^0 g_j(x^i_0)$ for $j\in J$, there exists $v^i\in \mathbb{R}^n$ such that
\begin{equation*}
\begin{split}
&\langle \widehat{x}^i_k,v^i\rangle\leq F^{ic}_k(x^i,\overline{u}_{-i})-F^{ic}_k(x^i_0,\overline{u}_{-i})+F^{iw}_k(x^i,\overline{u}_{-i})-F^{iw}_k(x^i_0,\overline{u}_{-i}), \forall k\in K,\\
&\langle \widehat{z}^i_j,v^i\rangle\leq g^i_j(x^i)-g^i_j(x_0^i),\forall j\in J,\\
&\langle y^*,v^i\rangle\leq\|x^i-x_0^i\|,\forall y^*\in\mathbb{B}^n.
\end{split}
\end{equation*}
\end{definition}

\begin{theorem}\label{th5.3}
Let $\varepsilon\in\mathbb{R}^m_+\backslash\{0\}$ and $\overline{u}:=(\overline{u}_1,\overline{u}_2,\cdots,\overline{u}_l)\in \widehat{S}$ satisfy (\ref{eq5.2}), where $F^i_k(\cdot,\overline{u}_{-i})$ and $g^i_j$ are locally Lipschitz continuous on $\mathbb{R}^n$ for all $k\in K$, $i\in\Lambda$ and $j\in J$.
Assume that $(F^i(\cdot,\overline{u}_{-i}),g^i_J)$ is generalized convex at $\overline{u}_i\in S^i$ for all $i\in\Lambda$. Then $\overline{u}$ is a w$\varepsilon$-QNE of $G$.
\end{theorem}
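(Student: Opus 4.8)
The plan is to reduce the assertion to the single-player sufficient optimality result of Theorem \ref{th4.3}, exploiting the reformulation recorded in Remark \ref{re5.1}. First I would fix an arbitrary player $i\in\Lambda$ and consider the associated single optimization problem of minimizing $F^i(u_i,\overline{u}_{-i})$ over $u_i\in S^i$ with the rival strategies $\overline{u}_{-i}$ held frozen. The key observation is that this is itself an instance of MIOP: its objective components are the IVFs $F^i_k(\cdot,\overline{u}_{-i})$, its constraint functions are the $g^i_j$, and its feasible set is $S^i$. The local Lipschitz continuity hypotheses on $F^i_k(\cdot,\overline{u}_{-i})$ and $g^i_j$ are exactly what is needed to make Theorem \ref{th4.3} applicable to this frozen problem.

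Next I would verify that both hypotheses of Theorem \ref{th4.3} hold at the point $\overline{u}_i$. On the one hand, $\overline{u}$ satisfies \eqref{eq5.2} by assumption, and the $i$-th relation in \eqref{eq5.2} is precisely the KKT condition \eqref{eq4.3} transcribed for the $i$-th player's data, with $\partial^0 F_k(\overline{u})$ replaced by $\partial^0 F^i_k(\overline{u}_i,\overline{u}_{-i})$, $\partial^0 g_j(\overline{u})$ by $\partial^0 g^i_j(\overline{u}_i)$, the index set $I(\overline{u})$ by $I(\overline{u}_i)$, and the multipliers $\lambda,\mu$ by $\lambda^i,\mu^i$. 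On the other hand, the standing assumption that $(F^i(\cdot,\overline{u}_{-i}),g^i_J)$ is generalized convex at $\overline{u}_i$ is, by Definition \ref{de5.2}, nothing more than the generalized convexity of Definition \ref{de4.2} written for the $i$-th player's frozen problem. Hence both premises of Theorem \ref{th4.3} are met for this problem at $\overline{u}_i$.

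Applying Theorem \ref{th4.3} then yields $\overline{u}_i\in QM(F^i(\cdot,\overline{u}_{-i}),S^i,\varepsilon)$, i.e.\ $\overline{u}_i$ is a weak $\varepsilon$-quasi-minimal solution of the $i$-th player's problem. Since $i\in\Lambda$ was arbitrary, this conclusion holds for every player simultaneously. Invoking the characterization in Remark \ref{re5.1}, which states that a strategy profile is a w$\varepsilon$-QNE of $G$ if and only if each of its components is a weak $\varepsilon$-quasi-minimal solution of the corresponding single-player problem, I conclude that $\overline{u}$ is a w$\varepsilon$-QNE of $G$. The argument is a clean reduction with no genuinely hard analytic step; the only point demanding care---the ``main obstacle'' such as it is---is the bookkeeping needed to match the parametrized game data (namely \eqref{eq5.2}, Definition \ref{de5.2}, and Remark \ref{re5.1}) term-by-term against the single-problem statements (namely \eqref{eq4.3}, Definition \ref{de4.2}, and Theorem \ref{th4.3}), so that no hypothesis is silently dropped when the profile $\overline{u}_{-i}$ is frozen.
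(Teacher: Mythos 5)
Your proposal is correct and follows exactly the same route as the paper, which proves Theorem \ref{th5.3} in one line by invoking Theorem \ref{th4.3} and Remark \ref{re5.1}; your write-up merely spells out the bookkeeping (matching \eqref{eq5.2} to \eqref{eq4.3} and Definition \ref{de5.2} to Definition \ref{de4.2}) that the paper leaves implicit.
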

\begin{proof}
In view of Theorem \ref{th4.3} and Remark \ref{re5.1}, we can deduce that $\overline{u}$ is a w$\varepsilon$-QNE of $G$.
\end{proof}

\section{Conclusions}\label{sec6}
\setcounter{equation}{0}
This paper presents some new results for existence theorems and optimality conditions to approximate solutions of MIOPs. Based on interval orders, we obtained the existence of weak $\varepsilon$-minimal solutions of MIOP. Moreover, we  established EVP for multiobjective interval-valued functions. By using this result, we  derived the existence of weak $\varepsilon$-quasi-minimal solutions of MIOP.
We gave the KKT necessary optimality condition for weak $\varepsilon$-minimal and $\varepsilon$-quasi-minimal solutions of MIOP and  obtained the KKT sufficient optimality condition for weak $\varepsilon$-quasi-minimal solutions.
We also devoted a modified $\epsilon$-KKT point of the weak minimal and the $\varepsilon$-quasi-minimal solution of MIOP. Finally, we studied KKT optimality conditions for approximate Nash equilibrium of the game $G$.

Let us end the paper by pointing out some important directions: (i) To consider other types of properties for approximate solutions of MIOP, such as stability, well-posedness, and duality; (ii) To extend our main results to the ones in fuzzy settings; (iii) To study the similar results based on other interval orders.

\section*{Acknowledgements}
The authors are grateful to the editors and referees for their valuable comments and suggestions.


\begin{thebibliography}{99}

\bibitem{BS}
B. Bede, L. Stefanini, Generalized differentiability of fuzzy-valued functions, {\it Fuzzy Sets Syst.} {\bf 230} (2013), 119-141.

\bibitem{CK}
T.D. Chuong, D.S. Kim, Approximate solutions of multiobjecitve optimization problems, {\it Positivity} {\bf 20} (2016), 187-207.

\bibitem{Clarke2}
F.H. Clarke, Optimization and Nonsmooth Analysis, Wiley, New York, 1993.

\bibitem{Clarke3}
F.H. Clarke, Functional Analysis, Calculus of Variations and Optimal control, Springer, London, 2013.

\bibitem{Clarke4}
F.H. Clarke, Yu.S. Ledyaev, R.J. Stern, and P.R. Wolenski, Nonsmooth Analysis and Optimal control, Springer, New York, 1998.

\bibitem{COC}
T.M. Costa, R. Osuna-G\'{o}mez, Y. Chalco-Cano, New preference order relationships and their application to multiobjective interval and fuzzy interval optimization problems, {\it Fuzzy Sets Syst.} {\bf 477} (2024), 108812.

\bibitem{DHM}
S. Dancs, M. Heged\"{u}s, P. Medvegyev, A general ordering and fixed point principle in complete metric space, {\it Acta Sci. Math. (Szeged)} {\bf 46} (1983), 381-388.

\bibitem{Dutta}
J. Dutta, K. Deb. R. Tulshyan, R. Arora, Approximate KKT points and a proximity measure for termination, {\it J. Glob. Optim.} {\bf 56} (2013),
1463-1499.

\bibitem{Ekeland1}
I. Ekeland, On the variational principle, {\it J. Math. Anal. Appl.} {\bf 47} (1974), 324-353.

\bibitem{Gao}
Y. Gao, S.H. Hou, X.M. Yang, Existence and optimality conditions for approximate solutions to vector optimization problems, {\it J. Optim. Theory Appl.} {\bf 152} (2012), 97-120.

\bibitem{GM}
D. Gupta, A. Mehra, Two types of approximate saddle points, {\it Numer. Funct. Anal. Optim.} {\bf 29} (2008), 532-550.

\bibitem{Guti}
C. Guti\'{e}rrez, R. L\'{o}pez, V. Novo, Generalized $\varepsilon$-quasi-solutions in multiobjective optimization problems: existence
results and optimality conditions, {\it Nonlinear Anal. TMA} {\bf 72} (2010), 4331-4346.

\bibitem{Huerga}
L. Huerga, B. Jim\'{e}nez, D.T. Luc, V. Novo, A unified concepty of approximate and quasi efficient solutions and associated subdifferentials
in multiobjective optimization, {\it Math. Program.} {\bf 189} (2021), 379-407.

\bibitem{IT}
H. Ishibuchi, H Tanaka, Multiobjective programming in optimization of interval valued objective functions, {\it Eur. J. Oper. Res.} {\bf 48} (1990), 219-225.

\bibitem{Hung}
N.H. Hung, H.N. Tuan, N.V. Tuyen, On approximate quasi Pareto solutions in nonsmooth semi-infinite interval-valued vector optimization problems,
{\it Appl. Anal.} {\bf 102} (2023), 2432-2448.

\bibitem{Jahn}
J. Jahn, Vector Optimization: Theory, Applications, and Extensions, Spring-Verlag, Berlin, 2004.

\bibitem{KSDD}
P. Kesarwani, P.K. Shukla, J. Dutta, K. Deb, Approximations for Pareto and proper Pareto solutions and their KKT conditions, {\it Math. Meth. Oper. Res.} {\bf 96} (2022), 123-148.

\bibitem{Loridan}
P. Loridan, $\epsilon$-Solutions in vector minimization problems, {\it J. Optim. theory Appl.} {\bf 43} (1984), 265-276.


\bibitem{Moore}
R.E. Moore, Interval Analysis, Prentice-Hall, Englewood Cliffs, New Jersey, 1966.

\bibitem{OHCR}
R. Osuna-G\'{o}mez, B. Hern\'{a}ndez-Jim\'{e}nez, Y. Chalco-Cano, G. Rui-Garz\'{o}n, New efficiency conditions for multiobjective
interval-valued programming problems, {\it Inform. Sci.} {\bf 420} (2017), 235-248.


\bibitem{Sahu}
B.R.B. Sahu, A.K. Bhurjee, P. Kumar, Efficient solutions for vector optimization problem on an extended interval vector space and its application to portfolio optimization, {\it Expert Syst. Appl.} {\bf 249} (2024), 123653.

\bibitem{Son}
T.Q. Son, N.V. Tuyen, C.F. Wen, Optimality conditions for approximate Pareto solutions of a nonsmooth vector optimization problem with
an infinite number of constraints, {\it Acta Math. Vietnam.} {\bf 45} (2020), 435-448.

\bibitem{Schi}
W. Schirotzek, Nonsmooth Analysis, Springer-Verlag, Berlin, 2007.

\bibitem{SB}
L. Stefanini, B. Bede, Generalized Hukuhara differentiability of interval-valued functions and interval differential equations, {\it Nonlinear Anal. TMA} {\bf 71} (2009), 1311-1328.

\bibitem{SJ}
L. Stefanini, M. Arana-Jim\'{e}nez, Karush-Kuhn-Tucker conditions for interval and fuzzy optimization in several variables under total and directional generalized differentiability, {\it Fuzzy Sets Syst.} {\bf 362} (2019), 1-34.

\bibitem{SAS}
L. Stefanini, M. Arana-Jim\'{e}nez, L. Sorini, Fr\'{e}chet and Gateaux gH-differentiability for interval valued functions of multiple variables,
{\it Inform. Sci.} {\bf 691} (2015), 121601.

\bibitem{SGA}
L. Stefanini, M.L. Guerra, B. Amicizia, Interval analysis and calculus for interval-valued functions of a single variable. Part I: Partial Orders, gH-Derivative, Monotonicity, {\it Axioms} {\bf 8} (2019), 113.


\bibitem{Singh}
D. Singh, B.A. Dar, D.S. Kim, KKT optimality conditions in interval valued multiobjective programming with generalized differentiable functions,
{\it Eur. J. Oper. Res.} {\bf 254} (2016), 29-39.

\bibitem{Solei}
M. Soleimani-damaneh, On some multiobjective optimization problems arising in biology, {\it Int. J. Comput. Math.} {\bf 88} (2011), 1103-1119.

\bibitem{Tuyen}
N.V. Tuyen, Approximate solutions of interval-valued optimization problems, {\it Investigaci\'{o}n Oper.} {\bf 42} (2021), 223-237.

\bibitem{UPZ}
B.B. Upadhyay, R.K. Pandey, S.D. Zeng,  A generalization of generalized Hukuhara Newton's method for interval-valued multiobjective optimization problems, {\it Fuzzy Sets Syst.} {\bf 492} (2024), 109066.

\bibitem{Wu}
H.C. Wu, The Karush-Kuhn-Tucker optimality conditions in an optimization problem with interval-valued objective function, {\it Eur. J. Oper. Res.} {\bf 176} (2007), 46-59.

\bibitem{Wu1}
H.C. Wu, The Karush-Kuhn-Tucker optimality conditions in multiobjective programming problems with interval-valued objective functions,  {\it Eur. J. Oper. Res.} {\bf 196} (2009), 49-60.

\bibitem{Vill}
F.R. Villanueva, V.A. de Oliveira, T.M. Costa, Optimality conditions for interval valued optimization problems, {\it Fuzzy Sets Syst.} {\bf 454} (2023), 38-55.

\bibitem{ZH}
C.L. Zhang, N.J. Huang, On Ekeland's variational principle for interval-valued functions with applications, {\it Fuzzy Sets Syst.} {\bf 436} (2022), 152-174.

\bibitem{ZHO}
C.L. Zhang, N.J. Huang, D. O'Regan, On variational methods for interval-valued functions with some applications, {\it Chaos, Sol. Fract.} {\bf 167} (2023), 113083.
\end{thebibliography}
\end{document}